\documentclass{amsart}[11pt]
\usepackage{amsmath,amsthm,amsfonts,amssymb,amscd,enumerate,verbatim}
\usepackage{varwidth,stmaryrd}
\usepackage{graphicx}

\usepackage[all]{xy}\SelectTips{eu}{}

\usepackage{color}

\newcommand{\n}{{\mathfrak n}}
\newcommand{\m}{{\mathfrak m}}

\newcommand{\Image}{\operatorname{Image}}

\newcommand{\coker}{\operatorname{coker}}

\newcommand{\rank}{\operatorname{rank}}

\newcommand{\Tor}{\operatorname{Tor}}

\newcommand{\Cone}{\operatorname{Cone}}

\newcommand{\Hom}{\operatorname{Hom}}

\newcommand{\V}{\operatorname{V}}
\newcommand{\Z}{\operatorname{Z}}

\newcommand{\Ktac}{\operatorname{\mathsf{K}_{tac}}\nolimits}
\newcommand{\grKtac}{\operatorname{\mathsf{grK}_{tac}}\nolimits}
\newcommand{\Id}{\operatorname{Id}\nolimits}

\renewcommand{\H}{\operatorname{H}\nolimits}

\numberwithin{equation}{section}

\theoremstyle{plain}
\newtheorem{theorem}{Theorem}[section]

\newtheorem*{Main Theorem}{Main Theorem}

\newtheorem{proposition}[theorem]{Proposition}
\newtheorem{lemma}[theorem]{Lemma}
\newtheorem{corollary}[theorem]{Corollary}

{\Alph{theorem}}
{\Alph{theorem}}

\theoremstyle{definition}

\newtheorem{chunk}[theorem]{}
\newtheorem{remark}[theorem]{Remark}
\newtheorem{definition}[theorem]{Definition}
\newtheorem{example}[theorem]{Example}

  \newcounter{numlist} %
  {\end{list}}%

\theoremstyle{remark}

\numberwithin{equation}{theorem}

\begin{document}

\title[Rank Varieties over the Generic Hypersurface I]{Rank Varieties over the Generic Hypersurface I}
\author{David A. Jorgensen}
\address[D. A. Jorgensen]{Department of Mathematics, University of Texas at Arlington, 411 S. Nedderman Drive, Pickard Hall 429, Arlington, TX 76019, USA}
\email{djorgens@uta.edu}

\date{\today}  
\subjclass[2020]{13D02,16E05}
\keywords{Rank variety, Support variety, Complete intersection, Generic hypersurface}

\begin{abstract} To every local complete intersection ring one may associate a so-called generic hypersurface. In this paper we introduce rank varieties for modules and complexes over the generic hypersurface. The definition uses extension of scalars, rather than restriction of scalars which are used to define the conventional support varieties over a local complete intersection. We show that every projective variety can be realized as the rank variety of a finitely generated module over the generic hypersurface. We also investigate several properties of these rank varieties.
\end{abstract}

\maketitle

\section{Introduction}\label{sec:1}

There has been significant interest in rank and support varieties for cohomology of modules or complexes over various rings --- for example, commutative local complete intersection rings --- no doubt due to the elegant and useful connection between homological module theory and geometry (see \cite{BenIKP},\cite{BerPW1},\cite{BerPW2}, \cite{FP}, \cite{NP},\cite{PW}, for example, and \cite{C2} for a survey).  Building on Quillen's geometric treatment of cohomology of finite groups \cite{Q}, rank and support varieties were introduced by Carlson in \cite{C1} for cohomology of modules over modular group algebras of elementary abelian $p$-groups. He and Avrunin and Scott \cite{AvS} showed that these two notions of variety, although defined quite differently, agree.  Avramov in \cite{A1} defines support varieties now for cohomology of modules over commutative local complete intersection rings (of which the aforementioned rings studied by Carlson are a special case) and Avramov and Buchweitz in \cite{AB} extend the notion of support varieties to cohomology of pairs of modules over the same type of rings.  In that paper they also define rank varieties (without calling them so) and prove they coincide with the support varieties \cite[Theorem 2.5]{AB}. Steele defines rank and support varieties for cohomology of totally acyclic complexes in \cite{S} and shows that these varieties also coincide.
There are also now quite general and sophisticated notions of support, see \cite{BenIK} for example. 
Although equivalent when both defined, an argument can be made that rank varieties are more concrete and easier to compute than support varieties. 

To every local complete intersection ring one may associate a so-called generic hypersurface ring. Generic hypersurfaces were used in \cite[Section 3]{AB} and \cite{J} to study support varieties over the associated complete intersection. The point of this paper is to introduce and study a notion of rank variety over the generic hypersurface itself. The reason for this is the last sentence of the previous paragraph and the remarkable result of Orlov \cite[Theorem 2.1]{O1} that proves the singularity category of the local complete intersection is equivalent to that of the generic hypersurface. Buchweitz \cite{B} shows that singularity category is equivalent to the category of totally acyclic complexes (straightforward modifications yield the equivalence in the graded setting).  Therefore we focus our study on rank varieties of graded totally acyclic complexes over the generic hypersurface. In a subsequent paper we study the correlation between the conventional support varieties of totally acyclic complexes over a local complete intersection and rank varieties of graded totally acyclic complexes over the generic hypersurface via these equivalences. All previous treatments of rank varieties involve restriction of scalars. Our approach instead uses extension of scalars. It turns out this framework leads to much simpler proofs of closure and an affirmative answer to the realizability question: is every variety the rank variety of some complex or module? Another related area of interest is matrix factorizations (see \cite{BW1},\cite{BW2} and \cite{O3}, for example).  Buchweitz \cite{B} and Orlov \cite{O2} show that the triangulated category of matrix factorizations of an element $w$ is equivalent to the category of totally acyclic complexes over the hypersurface defined by $w$. Thus one can interpret the rank varieties introduced here as those of graded matrix factorizations as well. Connections between the category of totally acycic complexes over the generic hypersurface and those over specialized hypersurfaces were established in \cite{J2}.  

We next describe briefly the framework of rank varieties for commutative local complete intersections.
Recall that a local complete intersection ring 
$S$ is the quotient of a commutative regular local ring $Q$ by an ideal generated by a regular sequence $f_1,\dots,f_c$. Without loss of generality, one may assume that each $f_i$ lies in the square of the unique maximal ideal $\n$ of $Q$.  In this case we say that $S$ has codimension $c$ and if $k=Q/\n$ denotes the residue field of $Q$, then one can associate to $S$, or, rather $I=(f_1,\dots,f_c)$, the projective space $\mathbb P^{c-1}_k$ of dimension 
$c-1$ according to 
\[
a_1f_1+\cdots +a_cf_c\in I \leftrightarrow \alpha=(\alpha_1,\dots,\alpha_c)\in\mathbb P^{c-1}_k
\]
where $a_i\in Q$ is a preimage of $\alpha_i\in k$ for each $i=1,\dots,c$. Note that at least one 
$a_i$ must be a unit in $Q$, so that $a_1f_1+\cdots +a_cf_c$ is always a minimal generator of $I$.  In this case we call $Q/(a_1f_1+\cdots +a_cf_c)$ a permissible hypersurface. Also note that distinct elements of $I$ may correspond to the same element in $\mathbb P^{c-1}_k$. Specifically, $a_1f_1+\cdots +a_cf_c$ and $a'_1f_1+\cdots +a'_cf_c$ correspond to the same element 
$\alpha\in\mathbb P^{c-1}_k$ if and only if $a_i-a'_i\in\n$ for $i=1,\dots,c$. As we shall see, this nonunique correspondence causes no problems in the theory.

The previous considerations of rank varieties use certain ring maps $R_\alpha\to S$, and parameterize by $\alpha\in\mathbb P^{c-1}_k$ those $R_\alpha$ over which a given $S$-module or complex has infinite projective dimension or is nonzero. Thus these notions of rank variety are defined through restriction of scalars via the maps $R_\alpha\to S$. In the group algebra case the rings $R_\alpha$ are principle subalgebras of $S$, with the maps being inclusion. In the local complete intersection case the rings $R_\alpha$ are the permissible hypersurfaces $Q/(a_1f_1+\cdots+a_cf_c)$ and the maps 
$R_\alpha\to S$ are the natural surjections.

In this paper we instead consider a rank variety defined through extension of scalars via the natural
maps
\[
R \to R_\alpha 
\]
where $R$ is the generic hypersurface 
\[
R=Q[x_1,\dots,x_c]/(f_1x_1+\cdots+f_cx_c)
\] 
and again parameterize by $\alpha\in\mathbb P^{c-1}_k$ those permissible hypersurface rings 
$R_\alpha=Q/(a_1f_1+\cdots+a_cf_c)$ over which a given $R$-module or complex has infinite projective dimension or is nonzero after tensoring with $R_\alpha$.  

In Section 3 we define this version of rank variety for graded totally acyclic complexes over the generic hypersurface $R$. We show in 
\ref{independent} below that our notion of rank variety is well-defined, that is, it is independent of the choice of preimages $a_i$ of $\alpha_i$.  We also show it is well-defined up to homotopy equivalence, 
\ref{homeq}. We then show it is a Zariski closed set \ref{closed} and give in \ref{properties} several properties of rank varieties analogous to those defined via restriction of scalars.  We show in Section 4 that every projective variety in $\mathbb P^{c-1}_k$ is the rank variety of some graded totally acyclic $R$-complex and that a sort of Dade's Lemma \ref{Dades} holds in our context. Contrarily, Carlson's connectedness theorem does not hold in our context: we show by example in Section 5 that disconnectedness of the rank variety does not necessarilly imply decomposability of the object.

In Section \ref{modules} we restrict out attention graded modules over the generic hypersurface $R$ and establish the same properties for rank varieties of modules. Section 2 consists of preliminary results on complexes in general and graded totally acyclic complexes in particular.

\section{Preliminaries}\label{sec:2}

In this section we assume that $R$ is a commutative Noetherian ring. 

\subsection*{Complexes}

Recall that a (chain) $R$-complex $C$ is a sequence of $R$-module homomorphisms
\[
C: \cdots \to C_{n+1} \xrightarrow{\partial_{n+1}^C} C_n \xrightarrow{\partial_n^C} C_{n-1}\to\cdots
\]
such that $\partial_n^C\partial^C_{n+1}=0$ for all $n\in\mathbb Z$. A (chain) map $f:C\to D$ of $R$-complexes
is a family of maps $f_n:C_n\to D_n$ such that $\partial_n^Df_n=f_{n-1}\partial^C_n$ for all $n\in\mathbb Z$,
in other words, the squares in the following diagram commute.
\[
\xymatrixrowsep{2pc}
\xymatrixcolsep{3pc}
\xymatrix{
\cdots \ar@{->}[r] & C_{n+1} \ar@{->}[r]^{\partial_{n+1}^C}\ar@{->}[d]^{f_{n+1}} & C_n \ar@{->}[r]^{\partial_{n}^C}\ar@{->}[d]^{f_{n}} & C_{n-1} \ar@{->}[r]\ar@{->}[d]^{f_{n-1}} & \cdots \\
 \cdots \ar@{->}[r] & D_{n+1} \ar@{->}[r]^{\partial_{n+1}^D} & D_n \ar@{->}[r]^{\partial_{n}^D} & D_{n-1} \ar@{->}[r] & \cdots 
}
\] 

For an $R$-complex $C$ we have the shift operation $\Sigma C$, which returns another $R$-complex with
$(\Sigma C)_n=C_{n-1}$ and $\partial_n^{\Sigma C}=-\partial_{n-1}^C$. 

Given a map $f:C\to D$ of $R$-complexes, one constructs the mapping cone of $f$, $\Cone(f)$, which is another
$R$-complex defined by $\Cone(f)_n=D_n\oplus \Sigma C_n$ and 
$\partial_n^{\Cone(f)}$ is represented by the matrix 
$\left(\begin{array}{cc} \partial_n^D & f_{n-1}\\ 0 & -\partial_{n-1}^C \end{array}\right)$

\begin{chunk}\label{compose null}
Recall that a chain map $f:C\to D$ is null-homotopic if there exist maps $s_n:C_n\to D_{n+1}$ such that $\partial_{n+1}^Ds_n+s_{-1}\partial_n^C=f_n$ for all $n$; in this case we write $f\sim 0$. Two chain maps $f,g:C\to D$ are homotopic if $f-g\sim 0$, and this case we write $f\sim g$. It is easy to see that if $f\sim 0$ then for any compositions, $fg\sim 0$ and $hf\sim 0$. 
\end{chunk}

\subsection*{Contractibility} In this subsection we give the definition of contractibility and some properties we use later.

\begin{definition} A complex $C$ of finitely generated free $R$-modules is \emph{contractible at $n\in\mathbb Z$} if there exist maps $s_n:C_n\to C_{n+1}$ and $s_{n-1}:C_{n-1}\to C_n$ such that 
$\partial^C_{n+1}s_n+s_{n-1}\partial^C_n$ is an isomorphism.  In this case we say that $s_n$ and $s_{n-1}$ is a \emph{contraction} at $n$. The complex $C$ is \emph{contractible} if there exist maps 
$s_n:C_n\to C_{n+1}$ for each $n\in\mathbb Z$ such that $s_n$ and $s_{n-1}$ is a contraction at $n$ for all 
$n\in\mathbb Z$.
\end{definition}

\begin{remark} Typically one defines $C$ to be contractible if the identity map $\Id_C$ is 
nullhomotopic, but this is equivalent to our definition. Indeed, if 
$f_n=\partial^C_{n+1}s_n+s_{n-1}\partial^C_n$ is an isomorphism for all $n$, then the maps 
$s_nf_n^{-1}$ form a homotopy showing that $\Id_C$ is nullhomotopic.
\end{remark}

\begin{chunk}\label{contract equiv} Recall that complexes $C$ and $D$ are homotopically equivalent if there exist chain maps
$f:C\to D$ and $g:D\to C$ such that $gf\sim 1_C$ and $fg\sim 1_D$. If $C$ and $D$ are homotopically equivalent and $C$ is contractible then $D$ is also contractible.  Indeed, by \ref{compose null} 
$1_C\sim 0$ implies $f\sim 0$ which in turn implies $1_D\sim fg\sim 0$.
\end{chunk}

\subsection*{Vector spaces}

Recall that we say a map $f$ between finite dimensional vector spaces has \emph{rank $n$}, denoted 
$\rank f=n$, if the the vector space dimension of $\Image f$ is $n$.  Equivalently, $\rank f$ is the largest size nonzero minor in a matrix representing $f$; this size is independent of the matrix representing $f$. For consistency, we denote the vector space dimension of a finite dimensional vector space $V$ by $\rank V$.
The rank-nullity Theorem from linear algebra gives the following.

\begin{chunk}\label{rank=} Consider a complex $C$ of vector spaces. The following are equivalent: 
\begin{enumerate}
\item $C$ is exact at $n$;
\item $\rank\partial^C_{n+1}+\rank\partial^C_n=\rank C_n$;
\item $C$ is contractible at $n$.
\end{enumerate}
\end{chunk}

\subsection*{Integral domains}
Assume that $R$ is an integral domain.  Then $S=R-\{0\}$ is a multiplicatively closed set and each map of finitely generated $R$-modules $f:M\to N$ has a well-defined rank, which we denote by 
$\rank f$; it is the rank of $S^{-1}f$. If $M$ and $N$ are moreover free modules then $f$ can be represented by a matrix $A$ with respect to chosen bases of 
$M$ and $N$. For an integer $r$ we let $I_r(A)$ denote the ideal of $R$ generated by the 
$r\times r$ minors of $A$.  It is well-known that if $B$ is another matrix representing
$f$ with respect to some other choice of bases of $M$ and $N$ then 
$I_r(B)=I_r(A)$.  We write $I_r(f)$ for this common ideal. A key fact then is the following

\begin{chunk}\label{maprank}
For a map $f:M\to N$ of finitely generated free modules we have
\[
\rank f=\sup\{r\mid I_r(f)\ne 0\}
\]
Indeed, if $A$ is a matrix representing $f$ then $I_r(f)=0$ if and only if $S^{-1}I_r(f)=0$ if and only if $S^{-1}I_r(A)=I_r(S^{-1}A)=0$, and by definition, 
$\rank S^{-1}f=\sup\{r\mid I_r(S^{-1}A)\ne 0\}$.
\end{chunk}

\subsection*{Local rings and minimality} We recall the definition of minimal complex and state some properties.

\begin{chunk}\label{surjective1} Assume that $(R,\m,k)$ is a local ring.
\begin{enumerate}
\item  Let $f:M\to N$ a homomorphism of 
finitely generated $R$-modules.  Then $f$ is surjective if and only if the induced map $\overline f:M/\m M\to N/\m N$ is surjective. The `only if' implication is straightforward.  The `if' direction uses Nakayama's Lemma.
\item Let $f:F\to F$ be an endomorphism of a finitely generated free $R$-module.  Then $f$ is an isomorphism if and only if $f$ is surjective.
\end{enumerate}
\end{chunk}

Assume that $C$ is a complex of finitely generated free $R$-modules and that $(R,\m,k)$ is a local ring.
Recall that we say $C$ is \emph{minimal} if $\partial^C_n(C_n)\subseteq\m C_{n-1}$ for all 
$n\in\mathbb Z$.

We now show that every $R$-complex of finitely generated free modules contains a minimal complex as a direct summand.

\begin{proposition}\label{directsum} Assume that $(R,\m,k)$ is a local ring and let $C$ be a complex of finitely generated free $R$-modules.  Then $C$ is isomorphic to the direct sum of a contractible complex $E$ and a minimal complex $X$.
\end{proposition}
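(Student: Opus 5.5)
The plan is to split off trivial pieces one at a time, each of the form $0\to R\xrightarrow{1}R\to 0$ sitting in two adjacent degrees $n,n-1$. This is possible whenever some differential $\partial^C_n$ fails to land in $\m C_{n-1}$. Since the complex may be unbounded, I will handle each degree independently and then assemble; finite generation makes each individual step a finite process.

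First the local step. Suppose $\partial^C_n(C_n)\not\subseteq \m C_{n-1}$. Then some basis element $e\in C_n$ has $\partial_n(e)=u\notin\m C_{n-1}$. Because $u$ is a minimal generator of the free module $C_{n-1}$, Nakayama's lemma (\ref{surjective1}) lets us extend $u$ to a basis of $C_{n-1}$. Let $T$ be the subcomplex $Re\to Ru$, concentrated in degrees $n,n-1$; it is contractible.

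We also need a complement. In degree $n$, set $C'_n=\ker(\pi\partial_n)$, where $\pi\colon C_{n-1}\to Ru\cong R$ is the coordinate projection onto $u$. Then $C_n=Re\oplus C'_n$, since $\pi\partial_n(e)=1$. In degree $n-1$, set $C'_{n-1}=\partial_n(C'_n)+(\text{span of the other basis vectors})$. It is simpler to take $C'_{n-1}=\ker(\pi)$ and change basis in degree $n$ instead. Concretely, replace each other basis vector $e'$ of $C_n$ by $e'-\pi\partial_n(e')e$. Then $\partial_n(C'_n)\subseteq \ker\pi$, and $\partial_{n-1}(u)=\partial_{n-1}\partial_n(e)=0$.

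In degree $n+1$ we need $\partial_{n+1}(C_{n+1})\subseteq C'_n$. This holds because $\pi\partial_n\partial_{n+1}=0$, so $\partial_{n+1}$ lands in $\ker(\pi\partial_n)=C'_n$. Therefore $C\cong T\oplus C'$, where $C'$ has $C'_n$ and $C'_{n-1}$ replaced as above and is unchanged elsewhere. Each $C'_i$ is free of rank one less in degrees $n$ and $n-1$.

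Next, iterate. Working in a fixed degree $n$, repeat the local step until $\partial_n$ lands in $\m$ times the target. This terminates because $\rank C_n$ drops each time. The splittings performed at degree $n$ change only degrees $n$ and $n-1$. Later splittings at degree $n+1$ or $n-1$ replace $C_n$ (respectively $C_{n-1}$) by a direct summand, and minimality of $\partial_n$ restricted to a summand complex is preserved, because $\m C_{n-1}\cap C'_{n-1}=\m C'_{n-1}$ for a direct summand.

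The main obstacle is making the infinite iteration well defined for an unbounded complex. To avoid it, I would do all degrees at once rather than sequentially. For each $n$, choose a decomposition $C_n=A_n\oplus B_n\oplus M_n$ with the following properties:
\begin{itemize}
\item $B_n$ maps isomorphically, via $\partial_n$, onto a summand of $C_{n-1}$ (namely $A_{n-1}$, the image of a basis lift of $\overline{\partial_n}$);
\item $A_n=\partial_{n+1}(B_{n+1})$.
\end{itemize}
This is built by taking, over $k$, the image of $\overline{\partial_{n+1}}$ in $\overline{C_n}$ and a complement to the kernel of $\overline{\partial_n}$, then lifting.

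To justify the lifting, note first that the sum of the images and the preimage-complement is direct modulo $\m$, by exactness of $\overline{\partial}\,\overline{\partial}=0$. A set of elements of $C_n$ that is linearly independent modulo $\m$ extends to a basis. Then $E=\bigoplus(B_n\oplus A_{n-1})$ is a contractible subcomplex. The remaining work, and the delicate point, is adjusting the complement $M$ via the change-of-basis trick of the local step, done degreewise, so that it is a subcomplex. Minimality of $M$ follows because every unit entry of $\overline{\partial}$ has been absorbed into $E$: the rank of $\overline{\partial^C_n}$ equals the rank of $B_n$.
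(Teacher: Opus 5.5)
Your final construction is the paper's: $E_n=A_n\oplus B_n$, with $B_n$ lifting a complement of $\ker\overline{\partial_n}$ and $A_n=\partial_{n+1}(B_{n+1})$, and the step you leave open (finding a complementary subcomplex) is exactly the one the paper also leaves unproved, calling it an easy exercise when it sets $X=C/E$ and asserts $C\cong E\oplus X$. It closes as you suggest: with $\pi_{A_i}$ the coordinate projection onto $A_i$, the modules $X_n=\ker\pi_{A_n}\cap\ker(\pi_{A_{n-1}}\partial_n)$ satisfy $C_n=A_n\oplus B_n\oplus X_n$ (because $\partial_n|_{B_n}\colon B_n\to A_{n-1}$ is an isomorphism and $\partial_n(A_n)=0$) and $\partial_n(X_n)\subseteq X_{n-1}$ (by definition and $\partial\partial=0$); incidentally, your sequential route was already complete, since $C_n$ is only altered while treating degrees $n$ and $n+1$, so processing the degrees in the order $0,1,-1,2,-2,\dots$ makes every degree stabilize after finitely many steps.
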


\begin{proof} For each $n\in\mathbb Z$ choose a subset $B_{n+1}\subseteq C_{n+1}$ such that 
\[
\{\partial^C_{n+1}(b)+\m C_n\mid b\in B_{n+1}\}
\] 
is a $k$-basis of $\partial^C(C_{n+1})+\m C_n/\m C_n$. Note that since $\partial_{n+1}^C(B_{n+1})\cup B_n$ is 
linearly independent modulo $\m C_n$, by Nakayama's Lemma, it extends to a basis of $C_n$. Define
$E_n=\sum_{b\in\partial_{n+1}^C(B_{n+1})\cup B_n}Rb$.  Then clearly
\[
E: \cdots \to E_{n+1}\xrightarrow{\partial^C_{n+1}|E_{n+1}} E_n \xrightarrow{\partial^C_n|E_n} E_{n-1} \to \cdots
\]
Is a contractible subcomplex of $C$. Let $X$ be the quotient complex $C/E$, which is minimal by construction. It is an easy exercise to see that $C$ and $E\oplus X$ are isomorphic as complexes.
\end{proof}

\subsection*{Totally acyclic complexes}
We let $\Ktac(R)$ denote the subcategory of the homotopy category of chain complexes over $R$ consisting of the \emph{totally acyclic complexes}. That is, $\Ktac(R)$ is the category whose objects are the chain complexes $C$ of finitely generated free $R$-modules with
\[
\H(C)=0=\H(\Hom_R(C,R))
\]
and the morphisms $g:C\to D$ are the homotopy equivalence classes of chain maps.
We will often abuse terminology and call morphisms in $\Ktac(R)$ chain maps.


\begin{chunk}\label{CR}
Recall (see, for example, \cite{AM} or \cite{B}) that a complete resolution of an $R$-module $M$ is an object $C\in\Ktac(R)$ such that for some free resolution $F$ of $M$ we have $F_{\ge i}=C_{\ge i}$ for some $i\ge 0$. In this case, $C$ is contractible if and only if $M$ has finite projective dimension over $R$.
\end{chunk}

\begin{proposition}\label{contractminimal} Assume that $(R,\m,k)$ is local and let $C\in\Ktac(R)$.  Then $C$ is contractible at $n$ for some $n\in\mathbb Z$ if and only if $C$ is contractible.
\end{proposition}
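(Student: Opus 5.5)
The `if' direction is trivial, since a contractible complex is in particular contractible at every $n$. For the converse, we use Proposition \ref{directsum} to write $C\cong E\oplus X$ with $E$ contractible and $X$ minimal. First we check that contractibility at $n$ passes to the minimal summand $X$. Since $C\cong E\oplus X$ and $E$ is contractible, $C$ and $X$ are homotopy equivalent, because the projection $C\to X$ and the inclusion $X\to C$ are mutually inverse up to homotopy. Moreover, $X$ inherits total acyclicity from $C$: both $\H$ and $\H(\Hom_R(-,R))$ are additive and vanish on $E$. By \ref{contract equiv}, it then suffices to prove that $X=0$.

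Next we show $X_n=0$. Suppose $s_n,s_{n-1}$ is a contraction of $C$ at $n$. Composing with the inclusion and projection of the summand $X$, we get maps $t_n=\pi s_n\iota$ and $t_{n-1}=\pi s_{n-1}\iota$. Since $\partial^C=\partial^E\oplus\partial^X$, these satisfy
\[
\partial^X_{n+1}t_n+t_{n-1}\partial^X_n=\pi(\partial^C_{n+1}s_n+s_{n-1}\partial^C_n)\iota .
\]
The map $\pi f_n\iota$ need not obviously be an isomorphism. Instead, we argue modulo $\m$: $\overline{f_n}$ is an isomorphism of $C_n\otimes k$. By minimality of $X$, the image of $\overline{f_n}$ restricted to $X_n\otimes k$ lands in the image of $E$-parts. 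More precisely, $\overline{\partial^C}$ vanishes on $X\otimes k$, so for $x\in X_n\otimes k$ we have $\overline{f_n}(x)=\overline{\partial^C_{n+1}}\,\overline{s_n}(x)\in \Image\overline{\partial^E_{n+1}}\subseteq E_n\otimes k$. Thus $\overline{f_n}$ maps the whole of $C_n\otimes k$ into $\Image\overline{\partial^C_{n+1}}+\overline{s_{n-1}}(\Image\overline{\partial^C_n})$, which is contained in $\Image\overline{\partial^E_{n+1}}+\overline{s_{n-1}}(\Image\overline{\partial^E_{n}})$. The dimension of this space is at most $\rank\overline{\partial^E_{n+1}}+\rank\overline{\partial^E_n}=\rank (E_n\otimes k)$, where the equality holds by \ref{rank=} applied to the contractible, hence exact, complex $E\otimes k$. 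Surjectivity of $\overline{f_n}$ then forces $\rank(C_n\otimes k)\le\rank(E_n\otimes k)$, so $X_n\otimes k=0$, and by Nakayama $X_n=0$.

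Finally, we propagate $X_n=0$ to all degrees using total acyclicity of the minimal complex $X$; this is the main obstacle. Exactness of $X$ at $n-1$ with $X_n=0$ makes $\partial^X_{n-1}$ injective, so $X_{\le n-1}$ is a minimal free resolution of $\coker\partial^X_{n-1}$ starting in degree $n-1$. For $i<n-1$, the map $\partial^X_{i+1}$ has image in $\m X_i$, and the dual complex $\Hom_R(X,R)$ is exact. Dually, the cokernel of $\partial^{X*}_{n-1}\colon X_{n-2}^*\to X_{n-1}^*$ equals $\H_{n-1}$ of the dual, which vanishes, so $\partial^{X*}_{n-1}$ is surjective. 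Since $X$ is minimal its image lies in $\m X_{n-1}^*$, and Nakayama then gives $X_{n-1}^*=0$, so $X_{n-1}=0$. Inducting downward gives $X_i=0$ for all $i\le n$. Symmetrically, exactness of $X$ at $n+1$ makes $\partial^X_{n+2}$ surjective onto $X_{n+1}$ while its image lies in $\m X_{n+1}$, so $X_{n+1}=0$ by Nakayama, and upward induction handles all $i\ge n$. Hence $X=0$, and $C\cong E$ is contractible.
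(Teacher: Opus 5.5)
Your proof is correct and is essentially the paper's argument: you split $C\cong E\oplus X$ via \ref{directsum}, use minimality of $X$ to bound the rank of $(\partial^C_{n+1}s_n+s_{n-1}\partial^C_n)\otimes_R k$ by $\rank(E_n\otimes k)$ to get $X_n=0$, and then remove the remaining $X_i$ by Nakayama using exactness of $X$ upward and of $\Hom_R(X,R)$ downward, where your induction is the paper's $\inf$/$\sup$ argument. The detours through the maps $t_n$, the homotopy equivalence $C\simeq X$, and the remark that $X_{\le n-1}$ is a resolution are never used and can be deleted.
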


\begin{proof} The `if' direction is trivial.

For the `only if' direction, let $C\in\Ktac(R)$.  Then by \ref{directsum} we can assume that $C=E\oplus X$ where
$E$ is contractible and $X$ is minimal. Suppose that $C$ is contractible at $n$. Then there exist maps $s_n:C_n\to C_{n+1}$ and $s_{n-1}:C_{n-1}\to C_n$ such that $\partial^C_{n+1}s_n+s_{n-1}\partial^C_n$ is an isomorphism.
It follows that $\partial^C_{n+1}s_n+s_{n-1}\partial^C_n\otimes_Rk$ is an isomorphism.  Taking into account the minimality of $X$, we see that $\rank(\partial^C_{n+1}s_n+s_{n-1}\partial^C_n\otimes_Rk)$ is at most 
$\rank E_n$, which is a contradiction unless $X_n=0$. Suppose that $h=\inf\{i>n\mid X_i\ne 0\}$ is finite.
Then exactness of $X$ at $h$ and minimality implies $X_h=\m X_h$. Therefore Nakayama's Lemma implies 
that $X_h=0$, contradiction. Similarly, if $l=\sup\{i<n\mid X_i\ne 0\}$ is finite then exactness of
$\Hom_R(X,R)$ at $\Hom_R(X_l,R)$ and minimality implies that $\Hom_R(X_l,R)=\m \Hom_R(X_l,R)$, which implies 
by Nakayama's Lemma that $\Hom_R(X_l,R)=0$, which only happens if $X_l=0$, a contradiction.  Thus $X$ must be the zero complex, and therefore we have $C=E$ is contractible.
\end{proof}


\section{The generic hypersurface and rank varieties}\label{sec:3}

Let $Q$ be a regular local ring with maximal ideal $\n$ and residue
field $k$. Let $f_1,\dots,f_c$ be a regular sequence in $Q$ with $c\ge 2$.
Throughout the remainder of this paper we let $P$ denote the polynomial ring $Q[x_1,\dots,x_c]$
in the commuting indeterminates $x_1,\dots,x_c$. Setting the degree of each $x_i$ to be 1, we realize $P$ as a standard graded ring with $Q$ concentrated in degree 0. Notice that since $Q$ is a unique factorization domain, so is $P$. Thoughout the paper we consider the generic hypersurface $R$, which is defined to be the quotient of $P$
\[
R = P/(w)
\]
by the principal ideal generated by $w=f_1x_1+\cdots +f_cx_c$.  We claim that $w$ is irreducible in 
$P$, for if $w=ab$ for $a,b\in P$, then by degree considerations we must have, without loss of generality, 
$a\in Q$ and $b=b_1x_1+\cdots+b_cx_c$ with $b_i\in Q$ for $i=1,\dots,c$.  Equating coefficients yields 
$f_1=ab_1$ and $f_2=ab_2$. Thus $b_1f_2=b_1ab_2\in(f_1)$. Since $f_1,f_2$ form a regular sequence, it follows that $b_1\in(f_1)$, which forces $a$ to be a unit.  Since $P$ is a unique factorization domain, it follows that the principal ideal $(w)$ is prime, and so $R$ is an integral domain.

Let $C$ be a complex of finitely generated free $R$-modules.  Since $R$ is an integral domain, 
by \ref{maprank}, each map $\partial_n^C$ has a well-defined rank, $\rank\partial_n^C$.

\begin{chunk}\label{rankequality} Suppose that $C$ is an exact complex of finitely generated free $R$-modules. Then for all $n\in\mathbb Z$ we have the equality
\[
\rank\partial^C_n+\rank\partial^C_{n+1}=\rank C_n
\]
Indeed, this follows immediately from the definition of $\rank\partial^C_n$, \ref{rank=} and the fact that $S^{-1}C$ is an exact sequence of vector spaces. 
\end{chunk}

\subsection*{Graded $\Ktac(R)$} Since $R$ is a graded ring, we consider the category
$\grKtac(R)$ of graded totally acyclic complexes over $R$.  The objects are those $C\in \Ktac(R)$ such that each doug $\partial_n^C$ is homogeneous and the morphisms are the homogenous homotopy equivalence classes of homogeneous chain maps. Note that in this case the ideals 
$I_r(\partial^C_n)$ of $R$ are homogeneous.

\begin{chunk}\label{grktac} If $M$ is a finitely generated graded $R$-module then $M$ has a graded complete resolution 
$C\in\grKtac(R)$. Indeed, the proof is a graded analogue of the construction of Shamash 
\cite{Shamash}, see also \cite{E}.
\end{chunk}

\subsection*{Rank varieties}
For each $\alpha=(\alpha_1,\dots,\alpha_c)\in\mathbb P^{c-1}_k$ we fix preimages $a_i\in Q$ of 
$\alpha_i\in k$ for $i=1,\dots,c$ and let $R_\alpha$ denote the quotient
\[
R_\alpha = R/(x_1-a_1,\dots,x_c-a_c).
\]
Of course $R_\alpha$ depends on the choice of preimages, but as we shall see, the rank variety we consider below is independent of this choice.
We note that $R_\alpha$ is isomorphic to the specialized local hypersurface,
\[
R_\alpha\cong Q/(a_1f_1+\cdots+a_cf_c).
\]
For $C\in\grKtac(R)$ and $\alpha\in\mathbb P^{c-1}_k$ we let $C_\alpha$ denote the complex
\[
C_\alpha=C\otimes_R R/(x_1-a_1,\dots,x_c-a_c)
\]
which is an object of $\Ktac(R_\alpha)$. Indeed, according to \cite[pg. 397]{J}, $x_1-a_1,\dots,x_c-a_c$ is a regular sequence in $R$, and so $R/(x_1-a_1,\dots,x_c-a_c)$ has projective dimension $c$ over 
$R$. Then for arbitrary $n$, $\Tor^R_{c+1}(R/(x_1-a_1,\dots,x_c-a_c),\coker\partial^C_{n-c})=0$ shows that $C_\alpha$ is exact at degree $n$. Thus $C_\alpha$ is exact, and therefore totally acyclic over 
$R_\alpha$ \cite[lem. 2.4]{AM}.

\begin{definition} Let $C\in\grKtac(R)$.  We define the \emph{rank variety} of $C$ to be
\[
\V(C)=\{\alpha\in\mathbb P^{c-1}_k\mid C_\alpha\text{ is not contractible}\}
\]
\end{definition}

Note that we are now able to assign an invariant of dimension to objects in $\grKtac(R)$.
We first want to show that the rank variety is independent of the choice of preimages 
$a_1,\dots,a_c$ for $\alpha\in\mathbb P^{c-1}_k$. 

\begin{theorem}\label{independent} Let $C\in\grKtac(R)$ and $a_1,\dots,a_c$ and $a'_1,\dots,a'_c$ be sequences of elements of $Q$ such that $a_i-a'_i\in\n$ for $i=1,\dots,c$. Then
\[
C\otimes_RR/(x_1-a_1,\dots,x_c-a_c)
\] 
is contractible if and only if 
\[
C\otimes_RR/(x_1-a'_1,\dots,x_c-a'_c)
\] 
is contractible 
\end{theorem}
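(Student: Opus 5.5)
The plan is to reduce contractibility of $C\otimes_RR/(x_1-a_1,\dots,x_c-a_c)$ to a statement about the complex obtained by reducing all the way to the residue field $k$. The point is that this further reduction depends only on the $a_i$ modulo $\n$, that is, only on $\alpha$.

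\emph{Notation.} Put $J=(x_1-a_1,\dots,x_c-a_c)$ and $J'=(x_1-a'_1,\dots,x_c-a'_c)$. The ring $R/J\cong Q/(a_1f_1+\cdots+a_cf_c)$ is local, with maximal ideal the image of $\n$, and residue field $k$. Hence the maximal ideal of $R/J$ corresponds to the ideal $\n R+J$ of $R$. The same holds for $J'$.

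\emph{Step 1 (a criterion over local rings).} I first prove the following: for a local ring $(A,\m_A,k)$ and $D\in\Ktac(A)$, $D$ is contractible if and only if $D\otimes_Ak$ is exact.
\begin{enumerate}
\item By Proposition \ref{directsum}, write $D\cong E\oplus X$ with $E$ contractible and $X$ minimal.
\item Tensoring preserves null-homotopies, so $E\otimes_Ak$ is contractible, hence exact.
\item By minimality, $X\otimes_Ak$ has zero differentials. It is therefore exact only if every $X_n\otimes_Ak=0$, which by Nakayama's Lemma forces $X=0$.
\item Hence $D\otimes_Ak$ is exact iff $X=0$ iff $D\cong E$ is contractible. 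The reverse implication is simply that a contractible complex stays contractible, hence exact, after tensoring with $k$.
\end{enumerate}
Alternatively, one could combine Proposition \ref{contractminimal} with \ref{rank=} and \ref{surjective1} to pass contractibility at a single degree between $D$ and $D\otimes_Ak$. The decomposition argument above is more direct.

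\emph{Step 2 (the reduction to $k$ depends only on $\alpha$).} As noted in the paper, $C_\alpha=C\otimes_RR/J$ lies in $\Ktac(R/J)$. Moreover
\[
C_\alpha\otimes_{R/J}k\cong C\otimes_RR/(\n R+J).
\]
Since $(x_i-a_i)-(x_i-a'_i)=a'_i-a_i\in\n$, we get the equality of ideals $\n R+J=\n R+J'$ in $R$. Both are the ideal $\n R+(x_1-a_1,\dots,x_c-a_c)$, whose quotient is $k$ with each $x_i$ acting as $\alpha_i$. Therefore
\[
C\otimes_RR/J\otimes k \;=\; C\otimes_RR/J'\otimes k
\]
as complexes of $k$-vector spaces.

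\emph{Step 3 (conclusion).} By Step 1 applied over the two local rings $R/J$ and $R/J'$, each of $C\otimes_RR/J$ and $C\otimes_RR/J'$ is contractible iff this common $k$-complex is exact. This proves the theorem.

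The only real obstacle is Step 1, the Nakayama-type criterion. It needs total acyclicity of $C_\alpha$, which the paper has already established, to apply Proposition \ref{directsum}. It also needs care that the residue field of $R/J$ is indeed $R/(\n R+J)$. That identification follows from $R/J\cong Q/(\sum a_if_i)$ with $\sum a_if_i\in\n$, so the quotient by the image of $\n$ is $Q/\n=k$.
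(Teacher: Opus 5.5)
Your proof is correct, but it is organized differently from the paper's.

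The paper never states an intrinsic criterion. It takes a contraction $s'_n$ of $C\otimes_R R/J$, lifts it to maps $S_n$ on $C$, and reduces these modulo $J'$ to get candidate contractions $s''_n$ of $C\otimes_R R/J'$. It then observes that the two maps $\partial s+s\partial$ agree after reducing modulo the common maximal ideal $\n R+J=\n R+J'$. By \ref{surjective1} they are therefore isomorphisms simultaneously.

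You instead prove a lemma first: a complex $D$ of finitely generated free modules over a local ring is contractible if and only if $D\otimes k$ is exact, via the decomposition \ref{directsum}. You then note that $C\otimes_R R/(\n R+J)$ is literally the same $k$-complex for both choices of preimages.

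Your route makes the dependence only on $\alpha$ transparent and symmetric. It also isolates exactly the criterion the paper later proves in rank form in Theorem \ref{closed} and Corollary \ref{closedcor}. The paper's route is more economical: it needs only Nakayama for endomorphisms, not the splitting $C\cong E\oplus X$, whose verification the paper leaves as an exercise. In fact, lifting a splitting of the exact $k$-complex and applying \ref{surjective1} degreewise would prove your Step 1 without \ref{directsum}. That is essentially the paper's mechanism.

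One small correction: neither your Step 1 nor \ref{directsum} uses total acyclicity. \ref{directsum} holds for any complex of finitely generated free modules over a local ring, and your argument never uses exactness of $D$. So your closing remark that total acyclicity of $C_\alpha$ is needed overstates the hypotheses, though harmlessly.
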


\begin{proof} Assume that $C'=C\otimes_RR/(x_1-a_1,\dots,x_c-a_c)$ is contractible.  Denote by
$\partial'$ the differentials $\partial^C\otimes_RR/(x_1-a_1,\dots,x_c-a_c)$. By assumption
there exist maps $s'_n:C'_n\to C'_{n+1}$ such that  
$\partial'_{n+1}s'_n+s'_{n-1}\partial'_n$ is an isomorphism for all $n$.  Lift the maps $s'_n$ to maps 
$S_n:C_n\to C_{n+1}$ and set $s''_n=S_n\otimes_RR/(x_1-a'_1,\dots,x_c-a'_c)$. Note that 
the rings $R\otimes_RR/(x_1-a_1,\dots,x_c-a_c)$ and $R\otimes_RR/(x_1-a'_1,\dots,x_c-a'_c)$
are both local with common residue field $k$.  Letting $\partial''$ denote the differentials of
$C''=C\otimes_RR/(x_1-a'_1,\dots,x_c-a'_c)$, we see that the maps 
$(\partial'_{n+1}s'_n+s'_{n-1}\partial'_n)\otimes_{R'}k$ and $(\partial''_{n+1}s''_n+s''_{n-1}\partial''_n)\otimes_{R''}k$ agree, and thus, by \ref{surjective1}, $\partial'_{n+1}s'_n+s'_{n-1}\partial'_n$ 
and $\partial''_{n+1}s''_n+s''_{n-1}\partial''_n$ are isomorphisms simultaneously. Thus $C''$ is also contractible.
\end{proof}

There is another issue of well-definedness that we now address. 

\begin{proposition}\label{homeq}
Suppose that $C$ and $D$ in $\grKtac(R)$ are homotopically equivalent, that is, isomorphic in 
$\grKtac(R)$, then $\V(C)=\V(D)$.
\end{proposition}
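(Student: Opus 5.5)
The plan is to show that tensoring with $R/(x_1-a_1,\dots,x_c-a_c)$ preserves homotopy equivalences, and then to invoke \ref{contract equiv}. Fix $\alpha\in\mathbb P^{c-1}_k$ with preimages $a_1,\dots,a_c$. By \ref{independent}, whether $C_\alpha$ is contractible does not depend on this choice, so we may use the same preimages for both $C$ and $D$. It then suffices to show that $C_\alpha$ is contractible if and only if $D_\alpha$ is contractible. This gives $\alpha\notin\V(C)$ if and only if $\alpha\notin\V(D)$.

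Let $f:C\to D$ and $g:D\to C$ be homogeneous chain maps with $gf\sim 1_C$ and $fg\sim 1_D$, realized by homotopies $t_n:C_n\to C_{n+1}$ and $u_n:D_n\to D_{n+1}$. The functor $-\otimes_R R_\alpha$ is additive and applied degreewise, so it carries chain maps to chain maps and preserves the homotopy identities. Concretely, the relation
\[
\partial^C_{n+1}t_n+t_{n-1}\partial^C_n=1_{C_n}-g_nf_n
\]
becomes, after tensoring,
\[
\partial^{C_\alpha}_{n+1}(t_n\otimes R_\alpha)+(t_{n-1}\otimes R_\alpha)\partial^{C_\alpha}_n=1_{(C_\alpha)_n}-(g_n\otimes R_\alpha)(f_n\otimes R_\alpha).
\]
The analogous identity holds for $D$. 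Hence $f_\alpha=f\otimes R_\alpha$ and $g_\alpha=g\otimes R_\alpha$ form a homotopy equivalence between $C_\alpha$ and $D_\alpha$ as $R_\alpha$-complexes. Gradings play no role here: $R_\alpha$ is not graded, and we only need ungraded homotopies.

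Now we apply \ref{contract equiv}. If $C_\alpha$ is contractible, then $D_\alpha$ is contractible, and the converse follows by symmetry. One small point needs checking. The paper defines contractibility via a contraction in which $\partial s+s\partial$ is an isomorphism, whereas \ref{contract equiv} uses $\Id\sim 0$. The Remark following the definition shows these agree: an isomorphism $f_n$ can be rescaled using $s_nf_n^{-1}$, and conversely $\Id\sim 0$ directly gives a contraction. There is no real obstacle. The only step requiring care is confirming that the homotopy relations survive base change, which is immediate from functoriality.
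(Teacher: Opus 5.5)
Your proposal is correct and follows the paper's own proof. Base change along $R\to R_\alpha$ carries null-homotopies to null-homotopies, so it carries homotopy equivalences to homotopy equivalences, and \ref{contract equiv} then gives that $C_\alpha$ and $D_\alpha$ are contractible simultaneously. Your appeal to \ref{independent} and your check against the Remark on the two definitions of contractibility are harmless details that the paper leaves implicit.
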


\begin{proof} First note that if a morphism $f:C\to D$ in $\grKtac(R)$ is null-homotopic, then for any $\alpha$ the induced morphism $f_\alpha:C_\alpha\to D_\alpha$ in $\Ktac(R_\alpha)$ is null-homotopic. It follows that if $C$ and $D$ are homotopically equivalent then so are 
$C_\alpha$ and $D_\alpha$ for any $\alpha$. Therefore by \ref{contract equiv}, $C_\alpha$ is contractible if and only if $D_\alpha$ is contractible. It follows that $\V(C)=\V(D)$.
\end{proof}

We now turn our attention to showing that the rank variety is Zariski closed.  For an ideal $I$ 
in the polynomial ring $k[x_1,\dots,x_c]$ we let $\Z(I)$ denote the zero set
\[
\Z(I)=\{\alpha\in\mathbb P_k^{c-1}\mid p(\alpha)=0\text { for all } p\in I\}
\]
Let $r_n=\rank\partial^C_n$ and $\overline{I_{r_n}(\partial^C_n)}$ denote the image of the ideal 
$I_{r_n}(\partial^C_n)$ under the natural surjection $R\to k[x_1,\dots,x_c]$. Note that since 
$C\in\grKtac(R)$, the ideals $I_{r_n}(\partial^C_n)$ are homogeneous ideals of $R$.  Thus the ideals 
$\overline{I_{r_n}(\partial^C_n)}$ are homogeneous ideals of $k[x_1,\dots,x_c]$.

\begin{theorem}\label{closed} Let $C\in\grKtac(R)$. For any $n\in\mathbb Z$ we have the equality of sets
\[
\V(C)=\Z\left(\overline{I_{r_n}(\partial^C_n)}\cap\overline{I_{r_{n+1}}(\partial^C_{n+1})}\right)
\]
In particular, the rank variety $\V(C)$ of $C$ is a Zariski closed set.
\end{theorem}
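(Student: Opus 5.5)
Fix $n$ and $\alpha\in\mathbb P^{c-1}_k$, with preimages $a_i$ chosen as in the definition of $C_\alpha$. The plan is to reduce the contractibility of $C_\alpha$ to a statement about ranks over the residue field $k$ of the local ring $R_\alpha$. Then we read those ranks off the images of the minor ideals $I_{r_n}(\partial^C_n)$ and $I_{r_{n+1}}(\partial^C_{n+1})$ evaluated at $\alpha$.

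\emph{Step 1 (reduction).} By \ref{contractminimal}, applied to $C_\alpha\in\Ktac(R_\alpha)$ over the local ring $R_\alpha$, $C_\alpha$ is contractible iff it is contractible at $n$. The claim is that this holds iff $\overline{C_\alpha}=C_\alpha\otimes_{R_\alpha}k$ is exact at $n$.

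For the forward direction, reduce a contraction mod the maximal ideal. For the converse, use \ref{rank=}: exactness gives a contraction $\bar s_n,\bar s_{n-1}$ of $\overline{C_\alpha}$ at $n$. Lift these to $s_n,s_{n-1}$ over $R_\alpha$. The map $\partial_{n+1}s_n+s_{n-1}\partial_n$ is then an endomorphism of the free module $(C_\alpha)_n$ that is surjective mod the maximal ideal, hence an isomorphism by \ref{surjective1}.

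By \ref{rank=} again, exactness of $\overline{C_\alpha}$ at $n$ is equivalent to
\[\rank\overline{\partial}_{n+1}+\rank\overline{\partial}_n=\rank C_n.\]

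\emph{Step 2 (identifying the reduced matrices).} The composite $R\to R_\alpha\to k$ sends $Q\to k$ and $x_i\mapsto\alpha_i$. It therefore factors as $R\to k[x_1,\dots,x_c]\to k$, where the first map is the natural surjection (note $w\mapsto 0$ since $f_i\in\n$) and the second is evaluation at $\alpha$. So if $A_n$ is a homogeneous matrix for $\partial^C_n$, then $\overline{\partial}_n$ is represented by $\overline{A_n}(\alpha)$, the image matrix over $k[x]$ evaluated at $\alpha$. The minors of $\overline{A_n}(\alpha)$ are the evaluations at $\alpha$ of the generators of $\overline{I_r(\partial^C_n)}$. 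Hence $\rank\overline{\partial}_n\ge r$ iff $\alpha\notin \Z(\overline{I_r(\partial^C_n)})$; homogeneity makes this well defined projectively.

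\emph{Step 3 (ranks can only drop).} Every $(r_n+1)$-minor of $A_n$ vanishes in the domain $R$ by \ref{maprank}, so $\rank\overline{\partial}_n\le r_n$, and likewise $\rank\overline{\partial}_{n+1}\le r_{n+1}$. By \ref{rankequality}, $r_n+r_{n+1}=\rank C_n$. Thus the rank equality in Step 1 holds iff both specializations attain full rank, i.e.\ iff $\alpha\notin\Z(\overline{I_{r_n}(\partial^C_n)})$ and $\alpha\notin\Z(\overline{I_{r_{n+1}}(\partial^C_{n+1})})$. Equivalently, $\alpha\in\V(C)$ iff $\alpha$ lies in
\[\Z\bigl(\overline{I_{r_n}(\partial^C_n)}\bigr)\cup\Z\bigl(\overline{I_{r_{n+1}}(\partial^C_{n+1})}\bigr)=\Z\bigl(\overline{I_{r_n}(\partial^C_n)}\cap\overline{I_{r_{n+1}}(\partial^C_{n+1})}\bigr),\]
using the standard identity $\Z(I)\cup\Z(J)=\Z(IJ)=\Z(I\cap J)$. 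Closedness follows.

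The main point needing care is Step 2. One must check that reducing through $R_\alpha$ to $k$ is the same as passing to $k[x]$ and evaluating at $\alpha$; this holds since the residue field of $R_\alpha$ is $k$ with $x_i\equiv a_i\equiv\alpha_i$. One must also check that the minor ideals commute with base change, which is immediate from the matrix description. The other steps are formal consequences of the cited results.
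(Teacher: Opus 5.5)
Your proof is correct and follows the paper's argument essentially step for step. In both directions you reduce contractibility of $C_\alpha$ to exactness of $C_\alpha\otimes k$ at $n$ via \ref{rank=}, \ref{surjective1} and \ref{contractminimal}, and then use the rank count $r_n+r_{n+1}=\rank C_n$. Your Steps 2 and 3 make explicit two points the paper uses tacitly: that reducing to the residue field of $R_\alpha$ is the same as passing to $k[x_1,\dots,x_c]$ and evaluating at $\alpha$, and that the specialized ranks can only drop below $r_n$ and $r_{n+1}$.
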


\begin{proof} Let $n\in\mathbb Z$ and suppose $\alpha$ is not in $\V(C)$, that is, $C_\alpha$ is contractible. Then there exist maps
$s_n:(C_\alpha)_n\to(C_\alpha)_{n+1}$ and $s_{n-1}:(C_\alpha)_{n-1}\to(C_\alpha)_n$
such that $\partial_{n+1}^{C_\alpha}s_n+s_{n-1}\partial_n^{C_\alpha}$ is an isomorphism. 
It follows from \ref{rank=} that the complex of $k$-vector spaces $C_\alpha\otimes_Qk$ is exact at $n$.  Therefore by \ref{rank=} $\rank(\partial^{C_\alpha}_{n+1}\otimes k)+\rank(\partial^{C_\alpha}_n\otimes k)=\rank((C_\alpha)_n\otimes_Qk)$.  Since $\rank((C_\alpha)_n\otimes_Rk)=\rank C_n$, it follows from the previous equality that 
$\rank(\partial^{C_\alpha}_{n+1}\otimes k)=r_{n+1}$ and $\rank(\partial^{C_\alpha}_n\otimes k)=r_n$ Thus 
$\alpha\notin \Z(\overline{I_{r_n}(\partial^C_n)})$ and 
$\alpha\notin \Z(\overline{I_{r_{n+1}}(\partial^C_{n+1})})$. In other words, 
$\alpha\notin \Z\left(\overline{I_{r_n}(\partial^C_n)}\cap\overline{I_{r_{n+1}}(\partial^C_{n+1})}\right)$.

For the converse, assume $\alpha$ is not in $\Z\left(\overline{I_{r_n}(\partial^C_n)}\cap\overline{I_{r_{n+1}}(\partial^C_{n+1})}\right)$ for some $n\in\mathbb Z$. Therefore 
$\rank(\partial^{C_\alpha}_{n+1}\otimes k)=r_{n+1}$ and $\rank(\partial^{C_\alpha}_n\otimes k)=r_n$ 
and it follows that 
$\rank(\partial^{C_\alpha}_{n+1}\otimes k)+\rank(\partial^{C_\alpha}_n\otimes k)=\rank((C_\alpha)_n\otimes_Qk)$.  Thus by \ref{rank=}, $C_\alpha\otimes k$ is exact at $n$ and there exist maps $s_n:(C_\alpha)_n\otimes k\to(C_\alpha)_{n+1}\otimes k$ and $s_{n-1}:(C_\alpha)_{n-1}\otimes k\to(C_\alpha)_n\otimes k$
such that $(\partial_{n+1}^{C_\alpha}\otimes k)s_n+s_{n-1}(\partial_n^{C_\alpha}\otimes k)$ is an isomorphism. We lift $s_n$ and $s_{n-1}$ to maps
$s^\sharp_n:(C_\alpha)_n\to(C_\alpha)_{n+1}$ and $s^\sharp_{n-1}:(C_\alpha)_{n-1}\to(C_\alpha)_n$
such that $s^\sharp_n\otimes k=s_n$ and $s^\sharp_{n-1}\otimes k=s_{n-1}$. Then by \ref{surjective1} the map
$\partial_{n+1}^{C_\alpha}s^\sharp_n+s^\sharp_{n-1}\partial_n^{C_\alpha}$ is an isomorphism and so a contraction at $n$. Finally, by \ref{contractminimal} we see that $C_\alpha$ is contractible and thus 
$\alpha\notin\V(C)$.
\end{proof}

\begin{corollary}\label{closedcor} Let $C\in\grKtac(R)$ and $\alpha\in\mathbb P^{c-1}_k$.  Then $C_\alpha$ is 
contractible if and only if for all $n\in\mathbb Z$
\[
\rank(\partial^{C_\alpha}_n\otimes_R k)+\rank(\partial^{C_\alpha}_{n+1}\otimes_Rk)=\rank C_n
\]
\end{corollary}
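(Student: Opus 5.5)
The plan is to read the corollary off the proof of Theorem \ref{closed}, together with \ref{rank=}, \ref{surjective1} and Proposition \ref{contractminimal}. Write $k$ for the common residue field of $R_\alpha$ and $Q$. Since $C_n$ is a free $R$-module of rank $\rank C_n$, the $k$-vector space $(C_\alpha)_n\otimes k$ has dimension $\rank C_n$. So the displayed equality says exactly, by \ref{rank=}, that the complex of $k$-vector spaces $C_\alpha\otimes k$ is exact at $n$, equivalently contractible at $n$. Thus the statement reduces to showing that $C_\alpha$ is contractible if and only if $C_\alpha\otimes k$ is contractible at every $n$.

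For the forward direction, suppose $C_\alpha$ is contractible, with maps $s_n$ making $\partial^{C_\alpha}_{n+1}s_n+s_{n-1}\partial^{C_\alpha}_n$ an isomorphism for every $n$. Tensoring with $k$ preserves both the composites and the isomorphism. Hence $C_\alpha\otimes k$ is contractible at each $n$, and \ref{rank=} gives the rank equality for all $n$. This is the same argument as the first half of the proof of Theorem \ref{closed}, run for an arbitrary $n$.

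For the converse, suppose the equality holds for all $n$; in fact a single $n$ suffices. By \ref{rank=} choose $k$-linear maps $s_n, s_{n-1}$ giving a contraction of $C_\alpha\otimes k$ at $n$, and lift them to maps $s^\sharp_n, s^\sharp_{n-1}$ between the free $R_\alpha$-modules $(C_\alpha)_{n}$, $(C_\alpha)_{n\pm1}$. Then $\partial^{C_\alpha}_{n+1}s^\sharp_n+s^\sharp_{n-1}\partial^{C_\alpha}_n$ is an endomorphism of the free module $(C_\alpha)_n$ whose reduction mod the maximal ideal is an isomorphism. By \ref{surjective1} it is surjective and hence an isomorphism, so $C_\alpha$ is contractible at $n$.

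Finally, $C_\alpha\in\Ktac(R_\alpha)$ and $R_\alpha$ is local, so Proposition \ref{contractminimal} upgrades contractibility at one $n$ to contractibility of $C_\alpha$. The only point needing care is the dimension count $\rank((C_\alpha)_n\otimes k)=\rank C_n$, which holds because base change preserves free rank. The rest is a direct transcription of the argument already given, so no step here is a real obstacle.
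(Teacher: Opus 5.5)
Your argument is correct, but it is not the route the paper takes for this corollary. The paper deduces it from the \emph{statement} of Theorem \ref{closed}. There, $\alpha\notin\V(C)$ if and only if $\alpha$ lies outside both $\Z(\overline{I_{r_n}(\partial^C_n)})$ and $\Z(\overline{I_{r_{n+1}}(\partial^C_{n+1})})$. That is, some $r_n$-minor and some $r_{n+1}$-minor survive specialization at $\alpha$, which means $\rank(\partial^{C_\alpha}_n\otimes_Rk)=r_n$ and $\rank(\partial^{C_\alpha}_{n+1}\otimes_Rk)=r_{n+1}$. All minors of size larger than $r_n$ already vanish in $R$, so specialization can only lower rank. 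Combined with $r_n+r_{n+1}=\rank C_n$ from \ref{rankequality}, this pair of equalities is equivalent to the sum condition.

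You instead inline the mechanism of the proof of Theorem \ref{closed}: exactness of $C_\alpha\otimes k$ in one degree, Nakayama lifting of a contraction, then Proposition \ref{contractminimal}. You never use minors, generic ranks, or the fact that $R$ is a domain. Your route buys self-containment and generality. It shows that any $D\in\Ktac(A)$ over a local ring $A$ is contractible if and only if $D\otimes k$ is exact in a single degree, and it makes explicit that one $n$ suffices. The paper's route buys a reading of the corollary as a restatement of the closedness theorem. It also gives the sharper fact that contractibility of $C_\alpha$ forces each specialized differential to attain its generic rank $r_n$ individually, which is the information exploited in Proposition \ref{Dades}.
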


\begin{proof} The result follows from the previous theorem since 
$\alpha\notin\Z\left(\overline{I_{r_n}(\partial^C_n)}\right)$ if and only if
$\rank(\partial^{C_\alpha}_n\otimes_Rk)=\rank\partial^C_n$. Then we use \ref{rankequality}.
\end{proof}

We list some basic properties of rank varieties.

\begin{theorem}\label{properties} For complexes in $\grKtac(R)$ the following hold.
\begin{enumerate}
\item $\V(C)=\V(\Sigma C)$.
\item $\V(C\oplus C')=\V(C)\cup\V(C')$.
\item $\V(C)=\V(\Hom_{\grKtac}(C,R))$.
\item If $C_1\to C_2\to C_3\to \Sigma C_1$ is an exact triangle in $\grKtac(R)$ then
$\V(C_i)\subseteq \V(C_j)\cup\V(C_k)$ for $\{i,j,k\}=\{1,2,3\}$.
\end{enumerate}
\end{theorem}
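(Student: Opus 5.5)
The plan is to reduce each item to the behaviour of the base change $C\mapsto C_\alpha$ with respect to shifts, sums, duals and cones. I would then use the characterization of contractibility in $\Ktac(R_\alpha)$: by \ref{contract equiv} it is the vanishing of an object up to isomorphism, and by \ref{contractminimal} contractibility at a single degree already suffices. Base change $-\otimes_R R_\alpha$ is an additive functor on complexes of free modules, and it sends null-homotopic maps to null-homotopic maps (as in the proof of \ref{homeq}). Hence it induces a triangulated functor $\grKtac(R)\to\Ktac(R_\alpha)$. Concretely, it commutes with $\Sigma$, with direct sums, with $\Hom(-,R)$ (since $\Hom_R(F,R)\otimes_R R_\alpha\cong\Hom_{R_\alpha}(F\otimes_R R_\alpha,R_\alpha)$ for finitely generated free $F$), and with mapping cones (since $\Cone(f)_\alpha=\Cone(f_\alpha)$ degreewise).

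For (1), note $(\Sigma C)_\alpha=\Sigma(C_\alpha)$. A contraction $\{s_n\}$ of $C_\alpha$ gives $\{-s_{n-1}\}$ as a contraction of $\Sigma C_\alpha$, and conversely.

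For (2), $(C\oplus C')_\alpha=C_\alpha\oplus C'_\alpha$. If both summands are contractible, the direct sum of the contractions contracts the sum. Conversely, if $C_\alpha\oplus C'_\alpha$ is contractible, composing the nullhomotopy of the identity with inclusion and projection gives a nullhomotopy of $1_{C_\alpha}$ (using \ref{compose null}).

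For (3), $\Hom(C,R)_\alpha\cong\Hom_{R_\alpha}(C_\alpha,R_\alpha)$. Dualizing a contraction gives a contraction, since the dual of an isomorphism $\partial s+s\partial$ is $s^*\partial^*+\partial^*s^*$, again an isomorphism. Because $C_\alpha$ consists of finitely generated free modules, $C_\alpha$ is its own double dual, so the implication reverses. Alternatively, one could use the rank criterion of \ref{closedcor}: ranks of transposed matrices over $k$ agree, and $\rank C_n$ is preserved.

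For (4), by rotation of triangles and (1) it suffices to treat the case $C_3\cong\Cone(f)$ for $f:C_1\to C_2$, in all three positions. After base change we get an exact triangle $C_{1\alpha}\to C_{2\alpha}\to C_{3\alpha}\to\Sigma C_{1\alpha}$ in $\Ktac(R_\alpha)$. We need that if two of its terms are contractible (zero objects), so is the third. This is the standard fact that in a triangulated category, a triangle with two zero vertices has its third vertex zero. Directly: if $C_{1\alpha}$ and $C_{2\alpha}$ are contractible, then $f_\alpha\sim0$, so $\Cone(f_\alpha)\simeq C_{2\alpha}\oplus\Sigma C_{1\alpha}$, which is contractible by (1) and (2). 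The other positions follow by rotating. Taking contrapositives yields $\V(C_i)\subseteq\V(C_j)\cup\V(C_k)$.

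The only real care needed is that the image triangle is genuinely exact in $\Ktac(R_\alpha)$, i.e.\ that cones are preserved. This is the main step to check, though it is routine because the cone is built degreewise from free modules and matrices.
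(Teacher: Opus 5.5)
Your proposal is correct. For (1) and (2) it is essentially the paper's argument, but for (3) and (4) you take a different route.

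The paper derives both (3) and (4) from the rank criterion \ref{closedcor}.
\begin{itemize}
\item For (3) it observes that transposition preserves $\rank(\partial^{C_\alpha}_n\otimes_R k)$ and that $\rank C_n^*=\rank C_n$. This is your ``alternatively''.
\item For (4) it reduces to $C_3=\Cone(f)$. From the block upper-triangular form of $\partial^{C_3}_n$ it reads off $\rank\partial^{C_3}_n\ge\rank\partial^{C_1}_{n-1}+\rank\partial^{C_2}_n$. This is to be applied after specializing at $\alpha$ and reducing to $k$. Combined with the reverse inequality, which holds automatically in a complex of vector spaces, it forces the rank equality of \ref{closedcor} for $(C_3)_\alpha$.
\end{itemize}

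You instead work purely with homotopies. For (3) you dualize the contraction maps directly. For (4) you use that $-\otimes_R R_\alpha$ commutes with cones and preserves homotopies. You combine this with the formal fact that the cone of a map between contractible complexes is contractible: the map is null-homotopic, so the cone splits.

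What each route buys:
\begin{itemize}
\item Your argument does not depend on \ref{closed} or \ref{closedcor}, hence not on the minimality and Nakayama arguments behind them. It also makes explicit the specialization step that the paper's one-line treatment of (4) leaves to the reader.
\item The paper's version stays inside the determinantal description of $\V(C)$. That is the same rank bookkeeping it reuses in Lemma~\ref{lemma}.
\end{itemize}
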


\begin{proof} Property (1) is clear from the definition. Property (2) follows since $(C\oplus C')_\alpha$ is contractible if and only if both $C_\alpha$ and $C'_\alpha$ are contractible.

Let $(\quad)^*$ denote $\Hom_{\grKtac}(\quad,R)$. For (3) we use \ref{closedcor} and the facts that 
$\rank((\partial^{C_\alpha}_n)^*\otimes_Rk)=\rank(\partial^{C_\alpha}_n\otimes_Rk)$ and 
$\rank C_n^*=\rank C_n$.

For (4) it suffices to prove $\V(C_3)\subseteq\V(C_1)\cup\V(C_2)$ since this, rotation and (1) give the other containments.  

Up to isomorphism of triangles we have $C_3=\Cone(f)$ for some morphism $f:C_1\to C_2$. Thus we may assume that $\partial_n^{C_3}$ has the form $\left(\begin{array}{cc} \partial_n^{C_2} & f_{n-1}\\ 0 & -\partial_{n-1}^{C_1}\end{array}\right)$. It follows that $\rank\partial_n^{C_3}\ge\rank\partial_{n-1}^{C_1}+\rank\partial_n^{C_2}$. Thus if both $C_1$ and $C_2$ are contractible, then so is $C_3$. This is another way to state the result.
\end{proof}

Another important result for support or rank varieties via restriction of scalars is Dade's Lemma, which states that the support or rank variety is trivial if and only if the object is trivial, for example, contractible. We don't quite get the same result for rank varieties via extension of scalars.

\begin{proposition}\label{Dades} For $C\in\grKtac(R)$ we have $\V(C)=\varnothing$ if and only if 
$C$ is contractible or $\sqrt{\overline{I_{r_n}(\partial_n^C)}}=(x_1,\dots,x_c)$ for all $n$ (equivalently, for two consecutive valus of $n$).
\end{proposition}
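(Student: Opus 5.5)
The plan is to read everything off Theorem \ref{closed}. Write $J_n=\overline{I_{r_n}(\partial^C_n)}\subseteq k[x_1,\dots,x_c]$; these are homogeneous ideals. Theorem \ref{closed} gives, for every $n$,
\[
\V(C)=\Z(J_n\cap J_{n+1})=\Z(J_n)\cup\Z(J_{n+1}).
\]
So $\V(C)=\varnothing$ holds if and only if $\Z(J_n)=\varnothing$ and $\Z(J_{n+1})=\varnothing$ for one (equivalently every) $n$. Since this holds for every $n$, emptiness for one consecutive pair forces $\Z(J_m)=\varnothing$ for all $m$. This already yields the parenthetical equivalence between ``all $n$'' and ``two consecutive values of $n$''.

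Next I translate $\Z(J)=\varnothing$ for a homogeneous ideal $J$ by the projective Nullstellensatz. Projective points with coordinates in $k$ only make the statement exact when $k$ is algebraically closed; I would either assume this or read $\mathbb P^{c-1}_k$ as the closed points over $\bar k$. With that convention, $\Z(J)=\varnothing$ if and only if $\sqrt J\supseteq(x_1,\dots,x_c)$. This leaves exactly two possibilities: $\sqrt J=(x_1,\dots,x_c)$, or $J$ contains a nonzero constant, so that $J=k[x_1,\dots,x_c]$.

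It remains to account for the unit-ideal case, which is where contractibility enters. Suppose $J_n=(1)$ and $J_{n+1}=(1)$ for some $n$. Since $J_n$ is homogeneous and contains $1$, some degree-zero homogeneous $r_n$-minor of $\partial^C_n$ maps to a unit of $k$. That minor is therefore a unit in the local ring $R_0=Q$, and $\partial^C_n\otimes_R k$ has rank $r_n$ when $k=R/(\n,x_1,\dots,x_c)$ is the residue field at the irrelevant maximal ideal; the same holds for $\partial^C_{n+1}$. By \ref{rankequality}, $r_n+r_{n+1}=\rank C_n$, so $C\otimes k$ is exact at $n$. Lifting a contraction exactly as in the proof of Theorem \ref{closed} and applying \ref{surjective1} shows $C$ is contractible at $n$. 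To conclude that $C$ is contractible, I would use a graded version of \ref{contractminimal}: decompose $C$ by the graded analogue of \ref{directsum} and use graded Nakayama. Conversely, if $C$ is contractible then every $C_\alpha$ is contractible, so $\V(C)=\varnothing$.

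The main obstacle is the mixed case, where $J_n=(1)$ but $J_{n+1}$ is only $(x_1,\dots,x_c)$-primary. To fit the stated dichotomy I must show this cannot happen. If $J_n=(1)$, a unit minor of size $r_n$ lets $\partial_n$ be split off graded-locally, giving a contractible summand $E$ with $C\cong E\oplus X$ and $X$ minimal. Minimality forces $r_n(X)=0$, hence $X_{n-1}=0$ or $X_n=0$ after comparing ranks via \ref{rankequality}. Then the Nakayama argument of \ref{contractminimal} kills all of $X$, so $C$ is contractible and all $J_m=(1)$. Thus either all $J_m$ are the unit ideal, meaning $C$ is contractible, or all are $(x_1,\dots,x_c)$-primary, which gives the statement.
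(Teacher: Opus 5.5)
Your proposal is correct and follows the paper's route. By Theorem \ref{closed}, $\V(C)=\Z(J_n)\cup\Z(J_{n+1})$ for every $n$, and the Nullstellensatz turns emptiness of these zero sets into the radical condition.

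The one place you go beyond the paper is worth keeping. The condition $\Z(J_n)=\varnothing$ only gives $\sqrt{J_n}\supseteq(x_1,\dots,x_c)$. The paper's ``in other words'' silently discards the possibility $J_n=(1)$ for non-contractible $C$, whereas you rule it out: a degree-zero unit $r_n$-minor forces $r_n(X)=0$ in a graded decomposition $C\cong E\oplus X$, hence $\partial^X_n=0$, and then $X=0$. You are also right that the forward implication needs $k$ algebraically closed. Only your first option (assuming $k=\bar k$) is compatible with the definition of $\V(C)$, since forming $R_\alpha$ requires lifting the coordinates $\alpha_i$ to $Q$; reading $\mathbb P^{c-1}_k$ over $\bar k$ does not fit that definition.

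Two steps should be stated more precisely.
\begin{enumerate}
\item The vanishing $X_n=0$ does not come from comparing ranks via \ref{rankequality}. Those only give $r_{n+1}(X)=\rank X_n$, which is compatible with $X_n\neq 0$, as multiplication by $x_1$ on $R$ shows. It comes instead from exactness plus minimality: $X_n=\ker\partial^X_n=\Image\partial^X_{n+1}\subseteq\mathfrak M X_n$, where $\mathfrak M=\n R+(x_1,\dots,x_c)R$, followed by graded Nakayama. This is the same argument you invoke from \ref{contractminimal}, and it in fact kills both $X_n$ and $X_{n-1}$.
\item Since $R$ is only graded-local, \ref{surjective1} does not apply to $R$ itself. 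You must choose the lifts of the contraction homogeneous of degree zero, which is possible because the graded contraction over $k$ has scalar entries that lift to $Q=R_0$, and then use graded Nakayama. An arbitrary lift, such as multiplication by $1+x_1$, can be an isomorphism modulo $\mathfrak M$ without being one over $R$.
\end{enumerate}
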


\begin{proof} Suppose that $\V(C)=\varnothing$ and that $C$ is not contractible. Then $C_\alpha$ is contractible for all $\alpha\in\mathbb P^{c-1}_k$. Therefore $C_\alpha\otimes_Rk$ is contractible for all $\alpha$ and so by \ref{rank=} we have $\rank(\partial_n^{C_\alpha}\otimes k)=r_n$ for all $n$ and all $\alpha$. It follows that the ideal $\overline{I_{r_n}(\partial_n^C)}$ of $k[x_1,\dots,x_c]$ has no zeros in $\mathbb P^{c-1}_k$; in other words $\sqrt{\overline{I_{r_n}(\partial_n^C)}}=(x_1,\dots,x_c)$. For the converse, clearly if $C$ is contractible then so is $C_\alpha$ for all $\alpha$. Therefore suppose $\sqrt{\overline{I_{r_n}(\partial_n^C)}}=(x_1,\dots,x_c)$ for two consecutive values of $n$. Then \ref{closed} shows that $\V(C)=\varnothing$.
\end{proof}

Of course there exist contractible $C\in\grKtac(R)$, for example, $0\to R\xrightarrow{1} R \to 0$. We show by example that the second condition of \ref{Dades} does in fact occur.

\begin{example} Let $Q=k\llbracket x,y\rrbracket$, where $k$ is a field, and $f_1=x^2$, $f_2=y^2$.  Then the generic hypersurface is $R=Q[x_1,x_2]/(x^2x_1+y^2x_2)$. The complex 
\[
C: \cdots\to R^2 \xrightarrow{\left(\begin{array}{cc} x^2 & y^2\\ -x_2 & x_1 \end{array}\right)} R^2
\xrightarrow{\left(\begin{array}{cc} x_1 & -y^2\\ x_2 & x^2 \end{array}\right)} R^2 \to\cdots
\]
is a complete resolution of $Q[x_1,x_2]/(f_1,f_2)$ over $R$. By inspection, each map $\partial^C_n$ has rank one and $\overline{I_1(\partial_n^C)}=(x_1,x_2)$.
\end{example}

\section{Realizability}

In this section we retain that $R$ denotes the generic hypersurface and show that any projective variety in 
$\mathbb P^{c-1}_k$ can be realized as the rank variety of some $C\in\Ktac R$.  

Let $C\in\Ktac(R)$ and 
$p\in R$.  Then multiplication by $p$ defines a chain endomorphism $\mu_p:C\to C$.  We denote the mapping cone of this endomorphism by $C^p$.

\begin{lemma}\label{lemma} Let $C\in\Ktac(R)$ and $p\in R$.  We let $\overline p$ denote the image of $p$ under the natural surjection $R\to k[x_1,\dots,x_c]$. Then
\[
\V(C^p)=\V(C)\cap\Z(\overline p)
\]
\end{lemma}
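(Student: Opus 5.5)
The plan is to compute $(C^p)_\alpha$ directly and decide its contractibility by whether $\overline p$ vanishes at $\alpha$. Throughout I take $p$ homogeneous, so that $C^p$ lies in $\grKtac(R)$ and the condition $\overline p(\alpha)=0$ does not depend on the chosen coordinates of $\alpha$.

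\emph{Reduction to $R_\alpha$.} Tensoring with $R_\alpha$ commutes with mapping cones, so
\[
(C^p)_\alpha\cong \Cone(\mu_{p_\alpha}\colon C_\alpha\to C_\alpha),
\]
where $p_\alpha$ is the image of $p$ in $R_\alpha\cong Q/(a_1f_1+\cdots+a_cf_c)$. The ring $R_\alpha$ is local with residue field $k$. Under $R\to R_\alpha\to k$ the element $p$ maps to $\overline p(a_1,\dots,a_c)$ reduced mod $\n$, which is $\overline p(\alpha)$. Therefore $p_\alpha$ is a unit exactly when $\overline p(\alpha)\neq 0$, and lies in the maximal ideal otherwise. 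By \ref{independent} the preimages $a_i$ may be chosen freely.

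\emph{Case $\overline p(\alpha)\neq 0$.} Here $\mu_{p_\alpha}$ is an isomorphism of complexes. The cone of an isomorphism is contractible: the inverse gives an explicit contracting homotopy, or one can use the exact triangle and the fact that the third term of a triangle on an isomorphism is zero in the homotopy category. Hence $\alpha\notin\V(C^p)$, which matches $\alpha\notin\Z(\overline p)$.

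\emph{Case $\overline p(\alpha)=0$.} I will show that $(C^p)_\alpha$ is contractible if and only if $C_\alpha$ is.

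If $C_\alpha$ is contractible, then the cone of any map between contractible complexes is contractible. This follows from the rank argument in the proof of \ref{properties}(4), applied over $R_\alpha$, or from \ref{contract equiv}. So $(C^p)_\alpha$ is contractible.

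Conversely, suppose $C_\alpha$ is not contractible. By \ref{directsum}, $C_\alpha\cong E\oplus X$ with $E$ contractible and $X$ minimal, and $X\neq 0$. Since $\mu_{p_\alpha}$ is diagonal with respect to this decomposition, $\Cone(\mu_{p_\alpha})\cong \Cone(\mu_{p_\alpha}|_E)\oplus\Cone(\mu_{p_\alpha}|_X)$. Because $p_\alpha$ lies in the maximal ideal and $X$ is minimal, the matrix form of the cone differential shows that $\Cone(\mu_{p_\alpha}|_X)$ is again minimal. It is also nonzero.

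\emph{Key step.} A nonzero minimal complex $Y$ of finitely generated free modules is never contractible. If $\Id_Y=\partial s+s\partial$, then after tensoring with $k$ the differentials vanish, so $\Id_{Y\otimes k}=0$ and hence $Y=0$ by Nakayama.

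Since a direct summand of a contractible complex is contractible (restrict the homotopy and compose with the projection), $(C^p)_\alpha$ is not contractible. Combining the two cases gives $\V(C^p)=\V(C)\cap\Z(\overline p)$.

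\emph{Main obstacle.} The step needing care is the last direction: that the cone of multiplication by a non-unit on a non-contractible complex stays non-contractible. Minimality handles it as above. As an alternative I could use \ref{closedcor}, since the ranks of the cone differentials after $\otimes k$ are just the sums of the ranks for two shifted copies of $C_\alpha$.
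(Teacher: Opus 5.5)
Your proof is correct, but it argues structurally where the paper argues numerically. The paper never leaves the rank criterion \ref{closedcor}. It writes $\partial^{C^p_\alpha}_n$ in block form and reads off ranks after $\otimes_R k$:
\begin{itemize}
\item when $\overline p(\alpha)\ne 0$, the off-diagonal block becomes an invertible scalar matrix, so $\rank(\partial^{C^p_\alpha}_n\otimes k)=\rank C_{n-1}$ and the criterion holds;
\item when $\overline p(\alpha)=0$, that block vanishes and the rank is $\rank(\partial^{C_\alpha}_n\otimes k)+\rank(\partial^{C_\alpha}_{n-1}\otimes k)$; the bounds $\le\rank C_{n-1}$ and $\le\rank C_n$ then force the criterion to hold for $C^p_\alpha$ exactly when it holds for $C_\alpha$.
\end{itemize}
The second case is the alternative you mention at the end.

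You instead prove a statement purely about the local ring $R_\alpha$. The cone of multiplication by a unit is contractible, via the explicit homotopy built from the inverse. The cone of multiplication by a non-unit on a non-contractible complex is not contractible: after splitting off the contractible part via \ref{directsum}, it has a nonzero minimal direct summand, and such complexes are never contractible by Nakayama.

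Your route is independent of \ref{closedcor}, and hence of \ref{closed} and \ref{contractminimal}, so total acyclicity of $C_\alpha$ plays no role. It also needs no separate treatment of the case $\overline p=0$, which the paper dismisses as obvious. The paper's route gains uniformity with the rank-ideal machinery it uses for closedness.

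Your assumption that $p$ is homogeneous is a sensible clarification that the paper leaves implicit. It is needed for $C^p$ to lie in $\grKtac(R)$, where $\V$ is defined, and for $\Z(\overline p)$ to be well defined in $\mathbb P^{c-1}_k$.

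One citation is loose: that the cone of a map between contractible complexes is contractible does not really follow from \ref{contract equiv}. The cleanest justification is that $\Id_{C_\alpha}\sim 0$ forces $\mu_{p_\alpha}\sim 0$, and cones of homotopic maps are isomorphic complexes, so $\Cone(\mu_{p_\alpha})\cong C_\alpha\oplus\Sigma C_\alpha$.
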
 

\begin{proof} The statement is obvious if $\overline p=0$ so we assume $\overline p\ne 0$. Recall that $\partial_n^{C^p}$ can be represented by the matrix
$\left(\begin{array}{cc} \partial_n^C & p\\ 0 & -\partial_{n-1}^C \end{array}\right)$
where $p$ represents the $\rank C_{n-1}\times\rank C_{n-1}$ scalar matrix of $p$. From this it
is to easy to see that $\rank\partial^{C^p}_n=\rank C_{n-1}$.

We first show the reverse containment. Suppose that $\alpha\notin \V(C^p)$ and that 
$\alpha\in\Z(\overline p)$. This means that $C^p_\alpha$ is contractible and 
$\overline p(\alpha)=0$. Therefore by \ref{closedcor}, for all $n\in\mathbb Z$ we have 
$\rank(\partial^{C^p_\alpha}_{n+1}\otimes_Rk)+\rank(\partial^{C^p_\alpha}_n\otimes_Rk)=
\rank C^p_n=\rank C_n+\rank C_{n-1}$. Since $\rank(\partial^{C_\alpha}_n\otimes_Rk)+
\rank(\partial^{C_\alpha}_{n-1}\otimes_Rk)=\rank(\partial^{C^p_\alpha}_n\otimes_Rk)\le\rank C_{n-1}$ and $\rank(\partial^{C_\alpha}_{n+1}\otimes_Rk)+
\rank(\partial^{C_\alpha}_n\otimes_Rk)=\rank(\partial^{C^p_\alpha}_{n+1}\otimes_Rk)\le\rank C_n$, it follows that  
$\rank(\partial^{C_\alpha}_{n+1}\otimes_Rk)+
\rank(\partial^{C_\alpha}_n\otimes_Rk)=\rank C_n$ for all $n$, and so by \ref{closedcor} $C_\alpha$ is contractible. In other words, $\alpha\notin\V(C)$.

Now suppose that $\alpha\notin \V(C^p)$ and that $\alpha\in\V(C)$. Therefore for all $n\in\mathbb Z$ we have $\rank(\partial^{C^p_\alpha}_{n+1}\otimes_Rk)+\rank(\partial^{C^p_\alpha}_n\otimes_Rk)=
\rank C^p_n=\rank C_n+\rank C_{n-1}$ and for some $n\in\mathbb Z$, 
$\rank(\partial^{C_\alpha}_n\otimes_R k)+\rank(\partial^{C_\alpha}_{n+1}\otimes_Rk)<\rank C_n$.
It follows that $\overline p(\alpha)\ne 0$ and so $\alpha\notin\Z(\overline p)$.

For the forward containment, assume that $\alpha\notin\V(C)\cap\Z(\overline p)$.  Thus either
$\alpha\notin\V(C)$ or $\alpha\notin\Z(\overline p)$. In either case we have that 
$\rank(\partial^{C^p_\alpha}_n\otimes_Rk)+\rank(\partial^{C^p_\alpha}_{n+1}\otimes_Rk)=\rank C^p_n$ for all $n\in\mathbb Z$. Thus by \ref{closedcor}, $C^p_n$ is contractible, that is, 
$\alpha\notin\V(C^p)$.
\end{proof}

\begin{proposition}\label{K} Let $K\in\grKtac(R)$ be a graded complete resolution of $k$.  Then
\[
\V(K)=\mathbb P^{c-1}_k
\]
\end{proposition}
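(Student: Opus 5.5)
The plan is to show directly that for every $\alpha\in\mathbb P^{c-1}_k$ the complex $K_\alpha$ is, in high degrees, a free resolution over $R_\alpha$ of a module of infinite projective dimension. Then \ref{CR} (applied over $R_\alpha$) shows $K_\alpha$ is not contractible.

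The statement needs an interpretation of "$k$" as a graded $R$-module, and I will take $k$ to mean $R/\n R$. Since every $f_i\in\n^2\subseteq \n$, we have $R/\n R\cong k[x_1,\dots,x_c]$, which is a finitely generated graded $R$-module. By \ref{grktac} it has a graded complete resolution $K$, so there is a graded free resolution $F$ of $k[x_1,\dots,x_c]$ over $R$ with $K_{\ge i}=F_{\ge i}$ for some $i$. (The residue field $R/(\n,x_1,\dots,x_c)$ would not work: each $x_j-a_j$ acts on it as $-\alpha_j$, and some $\alpha_j\ne 0$, so all $\Tor^R_*(\,\cdot\,,R_\alpha)$ vanish. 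Then $F\otimes_RR_\alpha$ is a bounded-below exact complex of frees, hence split, and \ref{contractminimal} would give $\V(K)=\varnothing$.)

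Fix $\alpha$ with chosen preimages $a_j$, and put $\bar a_j$ for the image of $a_j$ in $k$. The first key step is to compute $\Tor^R_j(k[x_1,\dots,x_c],R_\alpha)$. By \cite[pg. 397]{J}, $x_1-a_1,\dots,x_c-a_c$ is $R$-regular, so $R_\alpha$ is resolved over $R$ by the Koszul complex on this sequence. Hence $\Tor^R_*(k[x],R_\alpha)$ is the Koszul homology of $x_1-\bar a_1,\dots,x_c-\bar a_c$ on $k[x_1,\dots,x_c]$. This is a regular sequence there, being a linear change of variables, so $\Tor^R_j(k[x],R_\alpha)=0$ for $j>0$. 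We also have $k[x]\otimes_RR_\alpha\cong k$, the residue field of $R_\alpha$. Consequently $F\otimes_RR_\alpha$ is a free resolution of $k$ over $R_\alpha$. Since $K_\alpha$ agrees with $F\otimes_R R_\alpha$ in degrees $\ge i$, and $K_\alpha\in\Ktac(R_\alpha)$ as shown in Section 3, $K_\alpha$ is a complete resolution of $k$ over $R_\alpha$.

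The second key step is to check that $k$ has infinite projective dimension over $R_\alpha\cong Q/(a_1f_1+\cdots+a_cf_c)$. Some $a_j$ is a unit, so $a_1f_1+\cdots+a_cf_c$ is a nonzero element of $\n^2$; it is nonzero because $f_1,\dots,f_c$ is regular and the element is a minimal generator of $I$. Thus $R_\alpha$ is a local hypersurface that is not regular, and so $\operatorname{pd}_{R_\alpha}k=\infty$. By \ref{CR}, $K_\alpha$ is not contractible, so $\alpha\in\V(K)$. As $\alpha$ was arbitrary, $\V(K)=\mathbb P^{c-1}_k$.

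The step I expect to be the main obstacle is pinning down the correct module and the Tor-vanishing. Once $\Tor^R_{>0}(k[x],R_\alpha)=0$ and $k[x]\otimes_RR_\alpha=k$ are in hand, the rest is formal.
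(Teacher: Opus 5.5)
Your proof is correct and follows the paper's argument: $R_\alpha$ is a singular hypersurface, so its residue field has infinite projective dimension, and \ref{CR} then shows $K_\alpha$ is not contractible. The paper's proof assumes without justification that $K_\alpha$ is a complete resolution of the residue field of $R_\alpha$; your Koszul computation of $\Tor^R_{*}(R/\n R,R_\alpha)$ supplies exactly this missing step, and your reading of $k$ as $R/\n R\cong k[x_1,\dots,x_c]$ is the right one (it matches the rank-two $K$ used in Section 5, and, as you observe, for $R/(\n,x_1,\dots,x_c)R$ one would instead get $\V(K)=\varnothing$).
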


\begin{proof} Note that for every $\alpha\in\mathbb P^{c-1}_k$, $R_\alpha$ is a singular hypersurface and so 
$k$ has a infinite projective dimension over $R_\alpha$.  Thus by \ref{CR} $K_\alpha$ is not contractible for every $\alpha$.  The result follows.
\end{proof}

\begin{theorem}\label{realize} Let $W\subseteq \mathbb P^{c-1}_k$ be a projective variety.  Then there exist 
$C\in\Ktac(R)$ such that $\V(C)=W$.
\end{theorem}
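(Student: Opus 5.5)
Since $W$ is a projective variety, we may write $W=\Z(I)$ for a homogeneous ideal $I=(\overline{p_1},\dots,\overline{p_m})$ of $k[x_1,\dots,x_c]$, with each $\overline{p_i}$ homogeneous. We then have
\[
W=\Z(\overline{p_1})\cap\cdots\cap\Z(\overline{p_m}).
\]
The plan is to start with a complex whose rank variety is all of $\mathbb P^{c-1}_k$ and cut it down one hypersurface at a time using the mapping cone construction of Lemma \ref{lemma}.

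First, lift each $\overline{p_i}$ to a homogeneous element $p_i\in R$ of the same degree. This is possible because the natural surjection $R\to k[x_1,\dots,x_c]$ is graded and surjective in each degree; for instance, take coefficients in $Q$ mapping to the given coefficients in $k$. Next, let $K\in\grKtac(R)$ be a graded complete resolution of $k$, which exists by \ref{grktac}. By Proposition \ref{K}, $\V(K)=\mathbb P^{c-1}_k$.

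Define inductively $C^{(0)}=K$ and $C^{(i)}=(C^{(i-1)})^{p_i}$, the mapping cone of multiplication by $p_i$ on $C^{(i-1)}$. Each step stays in $\Ktac(R)$: the mapping cone of a morphism between totally acyclic complexes is again totally acyclic, since exactness of $C$ and of $\Hom_R(C,R)$ is preserved under cones by the long exact sequence. Applying Lemma \ref{lemma} repeatedly gives
\[
\V(C^{(m)})=\V(K)\cap\Z(\overline{p_1})\cap\cdots\cap\Z(\overline{p_m})=W.
\]
So $C=C^{(m)}$ works.

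The main obstacle is gradedness. The rank variety and Lemma \ref{lemma} are formulated for $\grKtac(R)$, where the ideals $I_r(\partial_n)$ are homogeneous, but multiplication by a homogeneous $p$ of positive degree is not a degree-zero map $C\to C$. I would handle this by replacing $C$ with the degree-shifted complex $C(-\deg p)$ in the source, so that $\mu_p\colon C(-\deg p)\to C$ is homogeneous of degree zero and its cone is graded. The cone differential is still represented by the matrix in the proof of Lemma \ref{lemma}, so the rank computations there and \ref{closedcor} go through unchanged.

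Two degenerate cases need a remark. If $W=\varnothing$, one may take $C=0$ (or any contractible complex); alternatively the argument above still applies, with the $\overline{p_i}$ generating an ideal with no projective zeros. If $W=\mathbb P^{c-1}_k$, take $m=0$, so that $C=K$.
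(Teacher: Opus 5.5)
Your proposal is correct and is essentially the paper's proof: start from a graded complete resolution $K$ with $\V(K)=\mathbb P^{c-1}_k$ (Proposition \ref{K}) and cut down one generator at a time with the cones $(\,\cdot\,)^{p_i}$ via Lemma \ref{lemma}. Your homogeneous lifts and the twist $\mu_p\colon C(-\deg p)\to C$ supply gradedness details that the paper leaves implicit, and one point of interpretation is inherited from the paper: for Proposition \ref{K} to hold, $K$ must resolve $R/\n R\cong k[x_1,\dots,x_c]$ (as in the example of Section 5), not $R/(\n R+(x_1,\dots,x_c))$, because $(\n R+(x_1,\dots,x_c))+(x_1-a_1,\dots,x_c-a_c)=R$ forces $\Tor^R_*(R/(\n R+(x_1,\dots,x_c)),R_\alpha)=0$, which would make $K_\alpha$ contractible for every $\alpha$.
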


\begin{proof} Let $p_1,\dots,p_m\in R$, and $\overline p_1,\dots,\overline p_m$ their images in 
$k[x_1,\dots,x_c]$, be such that $W=\Z((\overline p_1,\dots,\overline p_m))$. Let $K\in\Ktac(R)$ be a complete resolution of $k$. We construct $C\in\Ktac(R)$ by induction on $m$.

For $m=1$, let $p=p_1$ and consider $C_{(1)}=K^p$. Then by \ref{lemma} we have
$\V(C_{(1)})=\V(K)\cap\Z(\overline p)=\mathbb P^{c-1}_k\cap\Z(\overline p)=\Z(\overline p)$ and we are done in this case.

Suppose that $C_{(m-1)}\in\Ktac(R)$ has been constructed such that 
$\V(C_{(m-1)})=\Z((\overline p_1,\dots,\overline p_{m-1}))$  Now define $C=C_{(m-1)}^{p_m}$. Then by 
\ref{lemma} we have
$\V(C)=\V(C_{(m-1)})\cap\Z(\overline p_m)=\Z((\overline p_1,\dots,\overline p_m))$ and induction is complete.
\end{proof}

\section{Failure of Decomposability}

A result that holds for rank and support varieties over group algebras and complete intersections is that if a module is indecomposable then its support variety is connected.  We show by example that this property fails for rank varieties of modules over the generic hypersurface.

\begin{example} Consider the regular local ring $Q=k\llbracket x,y\rrbracket$, where $k$ is a field, and $Q$-regular sequence 
$f_1=x^2,f_2=y^2$. Then the corresponding generic hypersurface is
\[
R=Q[x_1,x_2]/(x^2x_1+y^2x_2)
\]
Let $K$ be a complete resolution of $k$ over $R$ and consider $x_1x_2\in R$.  Then the complex $K^{x_1x_2}$ is
given by
\[
K^{x_1x_2}: \cdots \to R^4 \xrightarrow{\partial} R^4\xrightarrow{\partial'} R^4 \xrightarrow{\partial} R^4\to\cdots
\]
where the dougs $\partial$ and $\partial'$ are given by the matrices
$D=\left(\begin{array}{cc|cc} -y & xx_1 & x_1x_2 & 0\\
																													x & yx_2 & 0 & x_1x_2\\ \hline
																													0 & 0 & yx_2 & -xx_1\\
																													0 & 0 & -x & -y\end{array}\right)$
and																																																		
$D'=\left(\begin{array}{cc|cc} -yx_2 & xx_1 & x_1x_2 & 0\\
																													x & y & 0 & x_1x_2\\ \hline
																													0 & 0 & y & -xx_1\\ 
																													0 & 0 & -x & -yx_2\end{array}\right)$,
																													respectively, with respect to the standard basis of $R^4$.
From \ref{lemma} and \ref{K} we have that $\V(K^{x_1x_2})=\Z(\overline{x_1x_2})$, which is obviously disconnected.
We will show that $K^{x_1x_2}$ is indecomposable. To this end we show that $M=\coker\partial$ is indecomposable. Suppose that $\epsilon:M\to M$ is an idempotent map.  We lift this to a commutative diagram
\[
\xymatrixrowsep{2pc}
\xymatrixcolsep{3pc}
\xymatrix{
R^4 \ar@{->}[r]^{\partial}\ar@{->}[d]^{\beta} & R^4 \ar@{->}[r]\ar@{->}[d]^{\alpha} & M \ar@{->}[r]\ar@{->}[d]^{\epsilon} & 0 \\
R^4 \ar@{->}[r]^{\partial} & R^4 \ar@{->}[r] & M \ar@{->}[r] & 0
}
\] 
Let $(a_{ij})$ and $(b_{ij})$ be matrices representing $\alpha$ and $\beta$, respectively, with respect to the standard basis of $R^4$. Then we have the matrix equation $(a_{ij})D=D(b_{ij})$. Working modulo $\m^2R+\m(x_1,x_2)^2+(x_1^2,x_2^2)$ we have the matrix equation
$(\overline{a_{ij}})\overline D=\overline D(\overline{b_{ij}})$ and can assume the entries
$\overline{a_{ij}}$ and $\overline{b_{ij}}$ belong to $k+kx_1+kx_2$. One sees right away that
$\overline a_{31}=\overline a_{32}=\overline a_{41}=\overline a_{42}=0$ and 
$\overline b_{31}=\overline b_{32}=\overline b_{41}=\overline b_{42}=0$. Further, we have
$\overline a_{11}=\overline b_{11}=\overline a_{22}=\overline b_{22}=\overline a_{33}=\overline b_{33}=\overline a_{44}=\overline b_{44}$. Looking at the $22$-entry of the matrix equation, if 
$\overline a_{21}\ne 0$, then $\overline b_{12}=\overline a_{21}x_1$, but this contradicts the $12$-entry of the matrix equation. Thus $\overline a_{21}=0$.  Similarly, looking at the $43$-entry of the matrix equation above, if $\overline a_{43}\ne 0$ then $\overline b_{43}=-\overline a_{43}x_2$. Now the $33$-entry implies that $\overline a_{34}=0$ and the $34$-entry implies that 
$\overline b_{43}=0$. Finally, the $44$-entry then implies that $\overline a_{43}=0$, a contradiction.  

We have thus shown that $(\overline{a_{ij}})$ is upper triangular.  If $\overline{a_{11}}$ is nonzero in $k$ then $(\overline{a_{ij}})$ is surjective, which implies $\overline\epsilon$ is surjective.  By Nakayama's Lemma, the same is true of $\epsilon$, which implies that $\epsilon$ is the unit idempotent. If $\overline{a_{11}}\in kx_1+kx_2$ then $(\overline{a_{ij}})$ is nilpotent, which implies the same for $\overline\epsilon$.  In this case it follows that $\epsilon$ is the zero idempotent.  Thus the only idempotent maps $M\to M$ are the unit or zero, and so $M$ is indecomposable.
\end{example}

\section{Modules}\label{modules}
In this section we define rank varieties for modules over the generic hpersurface
$R=P/(w)$, where $w=f_1x_1+\cdots +f_cx_c$, and use the results of the previous section to establish the same results for graded $R$-modules instead of graded totally acyclic $R$-complexes.

Given $\alpha=(\alpha_1,\dots,\alpha_c)\in\mathbb P^{c-1}_k$ we fix preimages $a_i\in Q$ of
$\alpha_i\in k$ and again define the local ring $R_\alpha$ to be
\[
R_\alpha=R/(x_1-a_1,\dots,x_c-a_c)
\]
For an $R$-module $M$ we let $M_\alpha$ denote the $R_\alpha$-module
\[
M_\alpha=M/(x_1-a_1,\dots,x_c-a_c)M
\]

\begin{definition} Let $M$ be a finitely generated graded $R$-module. We define the \emph{rank variety}
of $M$ to be
\[
\V(M)=\{\alpha\in\mathbb P^{c-1}_k\mid M_\alpha\text{ has infinite projective dimension over 
} R_\alpha\}
\]
\end{definition}

We show that the rank variety is independent of the choice of preimages 
$a_1,\dots,a_c$ for $\alpha\in\mathbb P^{c-1}_k$. 

\begin{theorem} Let $M$ be a finitely generated graded $R$-module and $a_1,\dots,a_c$ and $a'_1,\dots,a'_c$ be sequences of elements of $Q$ such that $a_i-a'_i\in\n$ for $i=1,\dots,c$. Then
\[
M/(x_1-a_1,\dots,x_c-a_c)M
\] 
has finite projective dimension over $R/(x_1-a_1,\dots,x_c-a_c)$ if and only if 
\[
M/(x_1-a'_1,\dots,x_c-a'_c)M
\] 
has finite projective dimension over $R/(x_1-a'_1,\dots,x_c-a'_c)$ 
\end{theorem}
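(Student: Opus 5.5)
We reduce to Theorem \ref{independent} by passing from $M$ to a graded complete resolution. By \ref{grktac}, choose a graded complete resolution $C\in\grKtac(R)$ of $M$. Then there is a graded free resolution $F$ of $M$ with $F_{\ge i}=C_{\ge i}$ for some $i\ge 0$.

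Write $\mathbf a=(x_1-a_1,\dots,x_c-a_c)$ and $R'=R/\mathbf a$. The first step is to show that $F\otimes_R R'$ is a free resolution of $M_\alpha=M/\mathbf aM$ over $R'$. Its homology is $\Tor^R_*(R',M)$. Since $\mathbf a$ is an $R$-regular sequence (\cite[pg. 397]{J}), these Tor groups are the Koszul homology $\H_*(\mathbf a;M)$. Hence we need $\mathbf a$ to be $M$-regular, and this fails in general.

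So we avoid this issue by a syzygy shift. Let $N=\coker\partial^C_{n}$ for $n$ large; this is a high syzygy of $M$ and also a totally reflexive $R$-module. Since $C$ is exact and $R'$ has finite projective dimension $c$ over $R$, we get $\Tor^R_{j}(R',\coker\partial^C_m)=0$ for all $j\ge 1$ and all $m$. This is exactly the dimension shifting used in Section 3 to show that $C_\alpha$ is exact. Therefore $C_{\ge n}\otimes_RR'$ is a free resolution of $N_\alpha$, and the full complex $C_\alpha$ is a complete resolution of $N_\alpha$ over $R'$.

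Next we compare $M_\alpha$ with $N_\alpha$. The truncation $F_{<n}$ is a finite complex, and $F\otimes_R R'$ has homology only in degrees $0,\dots,c$. From this, $\operatorname{pd}_{R'}M_\alpha<\infty$ if and only if $\operatorname{pd}_{R'}N_\alpha<\infty$. Concretely, $N_\alpha$ is a high syzygy over $R'$ of $M_\alpha$ up to free summands: for $n>c$, the Tor-vanishing makes $F_{\ge n}\otimes R'$ resolve $N_\alpha$, and $F_{\le n}\otimes R'$ is exact in degrees above $c$. To make this rigorous, I would compare with the $R'$-resolution of $M_\alpha$ via the long exact sequences of the short exact sequences $0\to \Omega^{j+1}M\to F_j\to \Omega^jM\to 0$ tensored with $R'$. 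The error terms are $\Tor^R_{\le c}$ groups, which vanish once $j\ge c$.

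Finally, by \ref{CR}, $\operatorname{pd}_{R'}N_\alpha<\infty$ if and only if $C\otimes_R R'$ is contractible. Theorem \ref{independent} says this contractibility is independent of the choice of lifts $a_i$ versus $a'_i$. Chaining the equivalences gives the result.

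The main obstacle is the comparison step in the previous paragraph, relating $\operatorname{pd} M_\alpha$ to $\operatorname{pd}(\Omega^n_R M)_\alpha$ when $\mathbf a$ need not be $M$-regular. My first attempt would be induction on $j$ using the short exact sequences $0\to\Omega^{j+1}M\to F_j\to\Omega^jM\to0$. Tensoring with $R'$ gives exact sequences up to the $\Tor_1^R$ terms. In each such sequence, if two of the three modules have finite projective dimension then so does the third, and the finitely many Tor terms also have finite projective dimension as needed. Failing that, I would argue directly via the Koszul complex on $\mathbf a$.
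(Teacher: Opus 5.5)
\textbf{The gap is the comparison step, and it cannot be closed.} Your chain needs $\operatorname{pd}_{R'}M_\alpha<\infty\iff\operatorname{pd}_{R'}N_\alpha<\infty$, and this is false in general. Because $F\otimes_RR'$ has homology $\Tor^R_i(R',M)$ in degrees $1\le i\le c$, the module $N_\alpha$ is a syzygy over $R'$ of $(\Omega^c M)_\alpha$, but not of $M_\alpha$. Your two-out-of-three argument on the sequences $0\to\Tor^R_{j+1}(R',M)\to(\Omega^{j+1}M)_\alpha\to(F_j)_\alpha\to(\Omega^jM)_\alpha\to0$ for $j<c$ needs these Tor modules to have finite projective dimension over $R'$, and nothing guarantees that.

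Here is a concrete failure, in the paper's example $Q=k\llbracket x,y\rrbracket$, $f_1=x^2$, $f_2=y^2$, with $M=R/xR$. This $M$ is graded, and $\operatorname{pd}_RM=1$ because $R$ is a domain. So its complete resolution is contractible, and $N_\alpha$ has finite projective dimension for every choice of lifts. Take $\alpha=(1:0)$ with the two lifts $(1,0)$ and $(1,y)$.
\begin{itemize}
\item With lift $(1,0)$: $R/(x_1-1,x_2)\cong Q/(x^2)$ and $M/(x_1-1,x_2)M\cong Q/(x)$. Its minimal resolution over $Q/(x^2)$ is the infinite periodic complex given by multiplication by $x$, so its projective dimension is infinite.
\item With lift $(1,y)$: $R/(x_1-1,x_2-y)\cong Q/(x^2+y^3)$ and $M/(x_1-1,x_2-y)M\cong Q/(x,y^3)$. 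This has projective dimension $1$, since $x$ is a nonzerodivisor on $Q/(x^2+y^3)$.
\end{itemize}
The offending error term is $\Tor^R_1(Q/(x^2),M)\cong xQ/x^2Q\cong Q/(x)$, which itself has infinite projective dimension.

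\textbf{Consequence for the statement and the paper's proof.} The remaining links in your chain are sound: the Tor-vanishing for the cokernels of $C$, \ref{CR} applied to $N_\alpha$, and Theorem \ref{independent}. So this example shows that the statement itself is false for arbitrary finitely generated graded $M$, and no repair of the comparison step is possible.

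The paper's proof is the short form of your outline. It takes a graded complete resolution $C$ of $M$, invokes Theorem \ref{independent}, and then applies \ref{CR} as though $C\otimes_RR/(x_1-a_1,\dots,x_c-a_c)$ were a complete resolution of $M/(x_1-a_1,\dots,x_c-a_c)M$. That is exactly the Tor-independence you rightly noticed is not automatic.

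Both arguments become correct under an additional hypothesis: $\Tor^R_i(R/(x_1-a_1,\dots,x_c-a_c),M)=0=\Tor^R_i(R/(x_1-a'_1,\dots,x_c-a'_c),M)$ for all $i\ge1$. By the dimension shifting you already used, this holds whenever $M=\Image\partial^C_0$ for some $C\in\grKtac(R)$, as in \ref{realizeM}. In that case $F\otimes_RR'$ resolves $M_\alpha$, $M$ can serve as its own $N$, and the comparison step is unnecessary.
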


\begin{proof} Let $C\in\grKtac(R)$ be a complete resolution of $M$ (see \ref{grktac}). Then by 
\ref{independent},  
$C\otimes_RR/(x_1-a_1,\dots,x_c-a_c)$ is contractible if and only if 
$C\otimes_RR/(x_1-a'_1,\dots,x_c-a'_c)$ is contractible. Therefore by \ref{CR} $M/(x_1-a_1,\dots,x_c-a_c)M$ 
has finite projective dimension over $R/(x_1-a_1,\dots,x_c-a_c)$ if and only if 
$M/(x_1-a'_1,\dots,x_c-a'_c)M$ 
has finite projective dimension over $R/(x_1-a'_1,\dots,x_c-a'_c)$.
\end{proof}

\begin{chunk}\label{varieties=}
Let $M$ be a finitely generated graded $R$-module, and $C\in\Ktac(R)$ a graded complete resolution of $M$.  Then \ref{CR} gives the equality of rank varieties $\V(M)=\V(C)$.
\end{chunk}

An immediate consequence of \ref{varieties=} is that rank varieties for modules are also Zariski closed. 

\begin{theorem}\label{closedM} Let $M$ be a finitely generated graded $R$-module. Then the rank variety 
$\V(M)\subseteq\mathbb P^{c-1}_k$ of $M$ is Zariski closed.
\end{theorem}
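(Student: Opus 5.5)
The plan is to reduce the statement to Theorem~\ref{closed} by passing from $M$ to a graded complete resolution.

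First, by \ref{grktac}, the finitely generated graded $R$-module $M$ has a graded complete resolution $C\in\grKtac(R)$. By \ref{varieties=}, $\V(M)=\V(C)$. Theorem~\ref{closed} then gives, for any fixed $n\in\mathbb Z$,
\[
\V(M)=\V(C)=\Z\left(\overline{I_{r_n}(\partial^C_n)}\cap\overline{I_{r_{n+1}}(\partial^C_{n+1})}\right),
\]
where $r_n=\rank\partial^C_n$. The ideals $\overline{I_{r_n}(\partial^C_n)}$ are homogeneous in $k[x_1,\dots,x_c]$ because $C$ is graded. So $\V(M)$ is the zero set of a homogeneous ideal, hence Zariski closed in $\mathbb P^{c-1}_k$.

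The one point that needs checking is \ref{varieties=} itself, since the paper states it only briefly. For each $\alpha$, the tensored complex $C_\alpha$ is a complete resolution of $M_\alpha$ over $R_\alpha$. To see this, note that $C_{\ge i}$ agrees with a graded free resolution $F$ of $M$ for some $i$. The sequence $x_1-a_1,\dots,x_c-a_c$ is $R$-regular, and $R/(x_1-a_1,\dots,x_c-a_c)$ has projective dimension $c$ over $R$.

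Replacing $M$ by a sufficiently high syzygy, which does not change whether the projective dimension over $R_\alpha$ is finite, we can arrange that $\Tor^R_{j}(R_\alpha,M)=0$ for $j>0$. With that arranged, $F\otimes_R R_\alpha$ is a free resolution of $M_\alpha$. Its tail agrees with $C_\alpha$, and $C_\alpha$ lies in $\Ktac(R_\alpha)$ as shown before the definition of rank variety.

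Then \ref{CR} says $C_\alpha$ is contractible exactly when $M_\alpha$ has finite projective dimension over $R_\alpha$. This is compatible with the syzygy replacement because $\Omega^{c} M$ and $M$ share their complete resolutions up to shift, and (1) of Theorem~\ref{properties} makes the variety insensitive to shifts.
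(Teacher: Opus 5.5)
\textbf{A step in your verification of \ref{varieties=} fails.} Your reduction is exactly the paper's: there Theorem~\ref{closedM} is stated as an immediate consequence of \ref{varieties=} and Theorem~\ref{closed}. The trouble is the step you singled out for checking, and it is false.

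What your argument actually proves is this. Take $N\ge c$, also past the index where $C$ and $F$ agree. Then $\Tor^R_j(\Omega^N_RM,R_\alpha)\cong\Tor^R_{j+N}(M,R_\alpha)=0$ for $j>0$. So $C_\alpha$ is a complete resolution of $(\Omega^N_RM)_\alpha$, and $\V(C)$ is the set of $\alpha$ with $\operatorname{pd}_{R_\alpha}(\Omega^N_RM)_\alpha=\infty$.

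The further claim that passing from $M$ to a high $R$-syzygy ``does not change whether the projective dimension over $R_\alpha$ is finite'' is unjustified. Tensoring $0\to\Omega_RM\to F_0\to M\to0$ with $R_\alpha$ only gives the exact sequence $0\to\Tor^R_1(M,R_\alpha)\to(\Omega_RM)_\alpha\to(F_0)_\alpha\to M_\alpha\to0$. So the $R_\alpha$-syzygy of $M_\alpha$ is $(\Omega_RM)_\alpha$ modulo $\Tor^R_1(M,R_\alpha)$, and nothing forces that Tor module to have finite projective dimension. Your closing remark about shifts compares only the complete resolutions of $M$ and $\Omega^c_RM$. It says nothing about $M_\alpha$ versus $(\Omega^c_RM)_\alpha$. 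The paper's one-line proof has the same gap, since \ref{varieties=} is asserted there via \ref{CR} with no Tor-vanishing hypothesis.

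\textbf{Counterexample.} Let $Q=k\llbracket s,t\rrbracket$, $f_1=s^2$, $f_2=t^2$, and $M=R/(sx_1)$. Since $R$ is a domain, $\operatorname{pd}_RM=1$, so the complete resolution $C$ is contractible and $\V(C)=\varnothing$.

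Now take $\alpha=(1:0)$ with preimages $(1,0)$. Then $R_\alpha=Q/(s^2)$ and $M_\alpha=Q/(s)$. Its minimal resolution $\cdots\xrightarrow{s}R_\alpha\xrightarrow{s}R_\alpha$ is infinite, while $(\Omega_RM)_\alpha\cong R_\alpha$ is free. Hence $\alpha\in\V(M)\setminus\V(C)$.

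With the preimages $(1,t)$ instead, $R_\alpha=Q/(s^2+t^3)$ is a domain and $M_\alpha=R_\alpha/sR_\alpha$ has projective dimension $1$. So for such $M$ the set $\V(M)$ even depends on the chosen preimages, and it cannot be read off from $C$.

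\textbf{What survives.} The argument goes through whenever $\Tor^R_j(M,R_\alpha)=0$ for all $j>0$ and all $\alpha$. This holds, e.g., when $x_1-a_1,\dots,x_c-a_c$ is $M$-regular, as for the module $\Image\partial^C_0$ used in \ref{realizeM}. For arbitrary $M$, the statement needs such a hypothesis, or a definition of $\V(M)$ through $\Omega^c_RM$.
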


The realizability question for modules is easy, given that it holds for totally acyclic complexes.

\begin{theorem}\label{realizeM} Let $W\subseteq \mathbb P^{c-1}_k$ be a projective variety.  Then there exists a 
finitely generated graded $R$-module $M$ such that $\V(M)=W$.
\end{theorem}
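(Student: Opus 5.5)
By Theorem~\ref{realize} we have a complex $C$ with $\V(C)=W$. The plan is to take $M=\coker\partial^C_n$ for a suitable $n$, check that $C$ is a graded complete resolution of $M$, and then invoke \ref{varieties=} to get $\V(M)=\V(C)=W$.

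First we make sure $C$ is graded. Start with a graded complete resolution $K\in\grKtac(R)$ of $k$, which exists by \ref{grktac} since $k$ is a graded $R$-module concentrated in degree $0$. Choose the $p_i$ as homogeneous lifts in $P$ of homogeneous generators $\overline p_i$ of the homogeneous ideal defining $W$; for instance, take $p_i$ to be the same polynomial with coefficients in $k$ lifted to $Q$. Multiplication by a homogeneous $p$ of degree $d$ is a homogeneous chain map $K\to K(d)$. Hence the mapping cone $K^p$ is again in $\grKtac(R)$, after the appropriate twist of the shifted summand so that the differentials are homogeneous. Lemma~\ref{lemma} is insensitive to degree twists, because the twist does not affect the ranks of the differentials after specialization to $R_\alpha$. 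So the induction in Theorem~\ref{realize} produces $C\in\grKtac(R)$ with $\V(C)=W$.

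Next set $M=\coker\partial^C_{1}$, a finitely generated graded $R$-module. Since $C$ is totally acyclic, the truncation $C_{\ge 0}$ is a graded free resolution of $M$, and $C$ agrees with it in degrees $\ge 0$. Thus $C$ is a graded complete resolution of $M$ in the sense of \ref{CR}. Then \ref{varieties=} gives $\V(M)=\V(C)=W$, which is what the theorem asserts.

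The only point requiring care is the grading bookkeeping in the cone construction, namely ensuring that Theorem~\ref{realize}, which is stated in $\Ktac(R)$, actually yields a graded object. I would handle this as described above, with homogeneous $p_i$ and twisted cones. A further check is that \ref{varieties=} holds for any complete resolution: by Proposition~\ref{homeq} and the uniqueness up to homotopy of complete resolutions, the choice of resolution does not affect $\V(M)$.
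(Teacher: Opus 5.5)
Your argument is the paper's: it also takes $C$ from Theorem~\ref{realize}, sets $M=\Image\partial^C_0$ (isomorphic to your $\coker\partial^C_1$ by exactness of $C$ at degree $0$), and applies \ref{varieties=}. Your homogeneous lifts $p_i$ and twisted cones are a useful addition: they justify that $C$ lies in $\grKtac(R)$, which the paper asserts even though Theorem~\ref{realize} is stated only in $\Ktac(R)$.

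One detail you added is wrong, and taken literally it breaks the base case. $K$ cannot be a complete resolution of the module $k$ concentrated in degree $0$, namely $R/(\n,x_1,\dots,x_c)R$.

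\emph{Why that choice fails.} For this module, $k\otimes_RR_\alpha$ and every $\Tor^R_j(k,R_\alpha)$ are annihilated by $(x_1,\dots,x_c)+(x_1-a_1,\dots,x_c-a_c)$. That ideal is all of $R$, since some $a_i$ is a unit. So for a free resolution $F$ of $k$, the complex $F\otimes_RR_\alpha$ is split exact. Hence $K_\alpha$ is contractible in high degrees, so contractible by Proposition~\ref{contractminimal}, and $\V(K)=\varnothing$. Your induction would then produce $\V(C)=\varnothing$ rather than $W$.

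\emph{The fix.} The module for which Proposition~\ref{K} actually holds is $R/\n R=k[x_1,\dots,x_c]$. This is also what the rank-$2$ complex $K$ in the example of Section~5 resolves. Here $(R/\n R)\otimes_RR_\alpha\cong k$. The higher Tor vanish because $x_1-\alpha_1,\dots,x_c-\alpha_c$ is a regular sequence on $k[x_1,\dots,x_c]$. So $K_\alpha$ is a complete resolution of the residue field of $R_\alpha$, and is not contractible. Since $R/\n R$ is graded too, your grading argument goes through unchanged after this replacement.
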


\begin{proof} By \ref{realize} we have $C\in\grKtac(R)$ such that $\V(C)=W$.  
Define $M=\Image\partial^C_0$. Then $C$ is a complete resolution of $M$ and then 
\ref{varieties=} gives the result.
\end{proof}

\subsection{Affine Patches} In this subsection we consider the realizability question for modules over the associated complete intersection ring $S=Q/(f_1,\dots,f_c)$. Although it is known that every projective variety in $\mathbb P^{c-1}_k$ is the support variety (equivalently, rank variety) of some finitely generated $S$-module \cite{Bergh}, see also \cite{AJ} and \cite{AI}, using the results of this paper we can realize the affine patches 
$\mathbb A^i_k=\{(\alpha_1,\dots,\alpha_c)\in\mathbb P^{c-1}_k\mid \alpha_i=1\}$
of a projective variety $W\subseteq\mathbb P^{c-1}_k$ in a relatively painless way, albeit by non-finitely generated $S$-modules. For existing proofs, it is difficult to ascertain a presentation of the module realizing the variety.

\begin{theorem} Let $W\subseteq\mathbb P^{c-1}_k$ be a projective variety. Then there exist (not
necessarily finitely generated) graded $S$-modules $M^i$ such that 
\[
\V(M^i)\cap\mathbb A^i_k=W\cap\mathbb A^i_k
\]
for $i=1,\dots,c$.
\end{theorem}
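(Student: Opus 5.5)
The plan is to transport the module realizing $W$ over the generic hypersurface down to $S$ by base change, and then compare the rank variety over $S$, which is defined by restriction along the surjections $R_\alpha\to S$, with the rank variety over $R$, which is defined by extension along $R\to R_\alpha$. The construction is a guess, and the comparison in the third step is the real content.

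First, by \ref{realizeM} choose a finitely generated graded $R$-module $M$ with $\V(M)=W$. By \ref{varieties=} we may instead work with a graded complete resolution $C\in\grKtac(R)$ of $M$, with $\V(C)=W$. Note that $R/(f_1,\dots,f_c)R\cong S[x_1,\dots,x_c]$, since $w\in(f_1,\dots,f_c)P$. Fix $i$ and set
\[
M^i=M\otimes_R R/(f_1,\dots,f_c,x_i-1)\cong M\otimes_R S[x_1,\dots,\widehat{x_i},\dots,x_c],
\]
with the grading inherited from the remaining variables. This is a graded $S$-module, typically not finitely generated over $S$, which matches the statement. Its variety $\V(M^i)$ is meant in the restriction-of-scalars sense recalled in the introduction, via the maps $R_\alpha\to S$.

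Second, fix $\alpha\in\mathbb A^i_k$ and choose preimages $a_j$ with $a_i=1$, which \ref{independent} allows. The key identity is
\[
R/(x_1-a_1,\dots,x_c-a_c)\cong Q/(a_1f_1+\cdots+a_cf_c)=R_\alpha .
\]
Factor the specialization $R\to R_\alpha$ as
\[
R\to R/(x_i-1)\cong Q[x_j:j\ne i]\big/\bigl(f_i+\textstyle\sum_{j\ne i}f_jx_j\bigr),
\]
followed by setting $x_j=a_j$ for $j\ne i$. Inverting $x_i$, or setting it to $1$, is exactly what makes the affine patch visible. Over the patch the remaining $x_j-a_j$ form a regular sequence on $R/(x_i-1)$ (cf.\ \cite[pg.~397]{J}). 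Hence $C\otimes_R R/(x_i-1)$ is totally acyclic, and specializing it further yields $C_\alpha$.

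Third, relate $\pd_{R_\alpha}M^i$ to $\pd_{R_\alpha}M_\alpha$. Viewed as an $R_\alpha$-module, $M^i$ should be a (possibly infinite) direct sum or filtered colimit of shifted copies of $M_\alpha\otimes_{R_\alpha}S$, coming from the polynomial variables $x_j-a_j$. So the goal becomes the equivalence
\[
\pd_{R_\alpha}(M_\alpha\otimes_{R_\alpha}S)<\infty \iff \pd_{R_\alpha}M_\alpha<\infty .
\]
This should follow from the Avramov--Buchweitz comparison for the surjection $R_\alpha\to S$ by the regular sequence of the remaining $c-1$ generators, combined with \ref{CR} and the contractibility criterion \ref{closedcor} applied to $C_\alpha\otimes S$.

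I expect this third step to be the main obstacle. Reduction modulo the other $f_j$ can a priori change finiteness of projective dimension over $R_\alpha$. The first thing I would try is to show that $C_\alpha$ contractible implies $C_\alpha\otimes S$ contractible, which is trivial. For the converse, I would use the rank equality of \ref{closedcor}, which only sees the fibre $\otimes k$; this fibre is unchanged by passing to $S$ because all the $f_j$ lie in $\n^2$. If that argument fails, the fallback is to replace $M^i$ by a suitable syzygy or a direct sum over all specializations, accepting that the module is not finitely generated.
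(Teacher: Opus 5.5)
\textbf{There is a genuine gap, in your third step.} Your $M^i$ is the paper's module. Since $w=0$ in $R$, we have $f_i=f_ix_i-f_i(x_i-1)\in(f_1,\dots,f_{i-1},x_i-1,f_{i+1},\dots,f_c)R$, so $M^i=M/(s_i)M$. You should take $M=\Image\partial^C_0$ as in the proof of \ref{realizeM}: the argument needs $M$ to be a syzygy in $C$, not just any module with $\V(M)=W$. The gap has two parts.

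\textbf{First part: the passage from $M^i$ to a single fibre rests on a false claim.} Take $\alpha\in\mathbb A^i_k\setminus W$. Then $C_\alpha$ is contractible, so $M_\alpha$ is $R_\alpha$-free and $M_\alpha\otimes_{R_\alpha}S$ is $S$-free. If $M^i$ were a direct sum or filtered colimit of copies of it, $M^i$ would be free or flat over $S$. It would then have finite projective dimension over every $R_\beta$, so the restriction-of-scalars variety you set out to compute would be empty. Thus whenever $\varnothing\neq W\cap\mathbb A^i_k\neq\mathbb A^i_k$, your claim contradicts the statement you are proving.

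As an $S$-module, $M^i$ mixes all fibres of $S[x_j:j\neq i]$ at once. Relating its variety in your sense to the fibre over $\alpha$ would need a genuinely different argument, e.g.\ localizing at the maximal ideals of $S[x_j:j\neq i]$ and using Avramov's support theory at each; you do not give one. The paper avoids this by evaluating $\V(M^i)$ fibrewise, as in Section~\ref{modules}: $\alpha\in\mathbb A^i_k$ lies in $\V(M^i)$ iff $M^i_\alpha=M^i/(x_1-a_1,\dots,x_i-1,\dots,x_c-a_c)M^i\cong M_\alpha/(f_j)_{j\neq i}M_\alpha$ has infinite projective dimension over $R_\alpha$.

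\textbf{Second part: the key equivalence is left half-proved.} The equivalence you isolate, $\operatorname{pd}_{R_\alpha}(M_\alpha\otimes_{R_\alpha}S)<\infty\iff\operatorname{pd}_{R_\alpha}M_\alpha<\infty$, is then the entire content, and your fibre argument gives only one direction. By \ref{CR} over $S$, contractibility of $C_\alpha\otimes_{R_\alpha}S$ is equivalent to $\operatorname{pd}_S M^i_\alpha<\infty$. That condition implies $\operatorname{pd}_{R_\alpha}M^i_\alpha<\infty$, but for general $S$-modules it is strictly stronger. So even granting that $C_\alpha$ is contractible iff $C_\alpha\otimes_{R_\alpha}S$ is, you only get $\operatorname{pd}_{R_\alpha}M_\alpha<\infty\Rightarrow\operatorname{pd}_{R_\alpha}M^i_\alpha<\infty$. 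The converse is what you need, and neither the computation over $k$ nor the unspecified appeal to Avramov--Buchweitz supplies it.

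\textbf{The missing observation is elementary.} Choose $a_i=1$. Then $a_1f_1+\cdots+a_cf_c$ together with the $f_j$, $j\neq i$, generates $(f_1,\dots,f_c)$ in the local ring $Q$. Hence the $f_j$ with $j\neq i$ form an $R_\alpha$-regular sequence and $R_\alpha/(f_j)_{j\neq i}\cong S$. Since $M_\alpha$ is a syzygy in the totally acyclic complex $C_\alpha$, this sequence is also $M_\alpha$-regular. For an $N$-regular element $z$ of the maximal ideal, the exact sequence $0\to N\xrightarrow{z}N\to N/zN\to 0$ gives $\operatorname{pd}_{R_\alpha}N/zN=\operatorname{pd}_{R_\alpha}N+1$. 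So $M^i_\alpha$ and $M_\alpha$ have finite projective dimension over $R_\alpha$ simultaneously, which is exactly the paper's proof. Your worry that reducing modulo the other $f_j$ might change finiteness is unfounded precisely because of this regularity.
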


\begin{proof} First notice that for each $i=1,\dots,c$ the sequence 
\[
s_i=f_1\dots,f_{i-1},x_i-1,f_{i+1},\dots,f_c
\] 
is a regular on $R$. By \ref{realizeM} there exists a finitely generated graded $R$-module $M$ such that $\V(M)=W$. Since $M$ is the zeroth image in a 
totally acyclic complex, the sequences $s_i$ are also regular on $M$. Now for each 
$i=1,\dots,c$ define $M^i=M/(s_i)M$.  Then each $M^i$ is an $S$-module and finitely 
generated over $R/(s_i)\cong S[x_1,\dots,\widehat x_i,\dots,x_c]$. 

Note that for $\alpha\in\mathbb A^i_k$ the sequence $f_1\dots,\widehat f_i,\dots,f_c$ is regular on 
$M_\alpha=M/(x_1-\alpha_1,\dots,x_i-1,\dots,x_c-\alpha_c)M$ and $M^i_\alpha=M^i/(x_1-\alpha_1,\dots,x_i-1,\dots,x_c-\alpha_c)M^i$ is isomorphic to $M_\alpha/(f_1\dots,\widehat f_i,\dots,f_c)M_\alpha$.  Thus $M^i_\alpha$ and $M_\alpha$ have finite projective dimension over $R_\alpha$ simultaneously.  This establishes the claimed equality of varieties. 
\end{proof}

\end{document}